\newcommand\N{\mathbb{N}}
\newcommand\R{\mathbb{R}}
\newcommand\Q{\mathbb{Q}}
\newcommand\del{\partial}
\newcommand{\Diff}{\mathrm{Diff}}
\newcommand{\BDiff}{\mathrm{BDiff}}
\newcommand{\EDiff}{\mathrm{EDiff}}
\newcommand{\Ric}{\mathrm{Ric}}
\newcommand\pos{\mathsf{pos}}
\def\calR{\mathscr{R}}
\def\calM{\mathscr{M}}
\def\calC{\mathscr{C}}
\def\calQ{\mathcal{Q}}
\newcommand{\rh}{\hookrightarrow}
\newcommand{\Emb}{\mathrm{Emb}}
\newcommand\e{\mathsf{e}}
\newcommand\m{\mathsf{m}}
\theoremstyle{plain}
\newtheorem*{maintheorem}{Main Theorem}
\newtheorem{theorem}{Theorem}[section]
\newtheorem{lemma}[theorem]{Lemma}
\newtheorem{corollary}[theorem]{Corollary}
\newtheorem{proposition}[theorem]{Proposition}
\newtheorem{observation}[theorem]{Observation}
\theoremstyle{definition}
\newtheorem{definition}[theorem]{Definition}
\newtheorem*{Convention}{Convention}
\theoremstyle{remark}
\newtheorem{remark}[theorem]{Remark}
\theoremstyle{definition}
\newcommand{\subjclass}[2][1991]{%
	\let\@oldtitle\@title%
	\gdef\@title{\@oldtitle\footnotetext{#1 \emph{Mathematics subject classification.} #2}}%
}
\begin{document}

\title{On the topology of the moduli space of positive scalar curvature concordances}
\author{Boris Botvinnik and David J. Wraith}
\subjclass[2010]{53C20}
  
\maketitle
\begin{abstract}
  Let $M$ be a manifold which admits a metric with positive scalar
  curvature (or a positive intermediate curvature in a suitable
  sense).  We study the moduli space $\calM^{\pos_*}_{\sqcup}(M\times
  I)_g$ of concordances of such metrics (with appropriate boundary
  conditions) which restrict to a given metric $g$ on $M \times \{0\}
  \cup\partial M \times I$. We show that
  $\pi_{4*}\calM^{\pos_*}_{\sqcup}(M \times I)_g \otimes \Q \neq 0$ in
  a stable range provided $\dim M$ is even. We obtain
  analogous results when positive scalar curvature is replaced by
  $k$-positive Ricci curvature for $k \ge 2$.
\end{abstract}
\section{Introduction}\label{intro}

We begin by providing a gentle introduction to the results in this paper, before going on to provide the full details.

Let $M$ be a compact manifold which supports a positive scalar curvature metric $g$. If $M$ is closed, a positive scalar curvature concordance (based at $g$) is a positive scalar curvature metric on $M \times [0,1]$ which restricts to $g$ at $M \times \{0\}$, and infinitessimally agrees with a product metric at both boundary components. On the other hand, if $M$ has non-empty boundary, then a positive scalar curvature concordance must agree infinitessimally with the product $g+dr^2$ on $(M \times \{0\}) \cup (\partial M \times [0,1])$, i.e. on `three sides' of the boundary, as well as having an infinitessimal product structure at $M \times \{1\}$. In either situation, the notion of concordance is classical in positive scalar curvature geometry. 

Fixing $g$, one can we consider the {\it space} of positive
scalar curvature concordances on $M \times I$ based at $g$ (equipped
with the smooth topology). Furthermore, there is a natural group of
diffeomorphisms which acts on the space of concordances: this is the
group of pseudoisotopies. Thus one can form the quotient space, which
by definition is the {\it moduli space of positive scalar curvature
  concordances}. We will denote this by $\calM^{\pos}_{\sqcup}(M\times
I)_g$. The topology of this moduli space will be our primary
focus. Precise definitions and relevant background material will be
given below. Note that as well as positive scalar curvature, the
curvature conditions we consider in this paper also include some
stronger `intermediate' notions of curvature. (See Definition
\ref{intermediate}).

In recent work (\cite{BW}), Watanabe and the first author studied
$\calM^{\pos}_{\sqcup}(D^n\times I)_g$, i.e. the case where $M$ is a
disc, and showed that its rational homotopy groups can be non-trivial
in an `unstable range'.

The main result in the current paper is to establish a far-reaching
stable analogue of this, which applies to compact even dimensional
$M$, both when $M$ is closed and when $M$ has non-empty boundary. We
will postpone the full version of the main theorem until we have
developed the relevant notions and notation, however the following is
a simplified version which deals with the positive scalar curvature
case:

\noindent{\bf Main Theorem (simplified form):} {\it Given $q \in \N$, provided the dimension of $M$ is even and sufficiently large depending on $q$, then $\pi_{4q}\calM^{\pos}_{\sqcup}(M\times I)_g \otimes \Q \neq 0.$}

We would like to emphasize that the constructions we use to study $\calM^{\pos}_{\sqcup}(M\times I)_g$ are quite general, and work for any smooth manifold admitting a metric of
positive scalar curvature. In particular, we do not use spin structures, Dirac operators, or techniques related to minimal hypersurfaces.
\begin{remark}
We note that an alternative, more homotopy theoretic, approach to concordances is given in
\cite{Ebert-Randal-Williams}. Here, the \emph{concordance
category $\calQ(M)$ of metrics with positive scalar curvature on
a manifold $M$} is introduced and studied. See in particular \cite[Theorem
C]{Ebert-Randal-Williams}.
\end{remark}

\medskip

In the remainder of the introduction we will develop the background concepts and context for the results in this paper. We begin, however, with some notation.
\smallskip

Let $M$ be a closed 
manifold. We denote by $\calR(M)$ the space of all Riemanian
metrics on $M$ (equipped with the smooth topology), and by
$\calR^{\pos}(M)\subset \calR(M)$ the subspace of metrics with positive
scalar curvature. 

If $M$ has non-empty boundary $\del M$, we fix a boundary metric $h$
on $\del M$ and assume that all metrics on $M$ agree up to infinite
order with a product metric $h+dr^2$ near the boundary. We denote by
$\calR_{\del}(M)_{h}$ and by $\calR^{\pos}_{\del}(M)_{h}$ the
corresponding spaces of metrics.

We also consider a cylinder $M\times I$, which is a manifold with
corners when $\partial M \neq \emptyset$. Then for a metric $g_0\in
\calR_{\del}(M\times \{0\})_{h}$, we denote by $\calR_{\sqcup}(M\times
I)_{g_0}$ the space of all metrics $\bar g$ agreeing up to infinite
order with $g_0+dt^2$ near $M\times \{0\}$, and with $h+dr^2+dt^2$
along $\del M\times I$. In other words $\bar g$ agrees up to infinite
order with a fixed metric at all points of
\begin{equation}\label{ast}
\bigl(M \times \{0\}\bigr) \cup \bigl(
\partial M \times [0,1]\bigr). 
\end{equation}
In the case where $\partial M =\emptyset,$ we require the conditions
to hold at $M \times \{0\}$ only. Notice that in either case, the
fixed part of the metric is determined by its restriction to $M \times
\{0\},$ so in our notation it suffices to record the metric only on
the ``bottom'' of the cylinder, even when $M$ has non-empty boundary.

If we fix a further metric $g_1\in \calR_{\del}(M\times \{1\})_{h}$ in
the case where $\partial M \neq \emptyset$, or $g_1\in \calR(M\times
\{1\})$ if $\partial M =\emptyset$, we denote by $\calR_{\del}(M\times
I)_{g_0,g_1}$ the subspace of $\calR_{\sqcup}(M\times I)_{g_0}$
consisting of all metrics which also agree up to infinite order with
$g_1+dt^2$ at $M\times \{1\}$.

Two metrics $g_0,g_1$ belonging to $\calR^{\pos}_{\del}(M)_{h}$ or
$\calR^{\pos}(M)$ are declared to be positive scalar curvature
\emph{concordant} if there exists a metric $\bar g\in
\calR^{\pos}_{\del}(M\times I)_{g_0,g_1}$.  For a given metric $g$ in
$\calR^{\pos}_{\del}(M)_{h}$ or $\calR^{\pos}(M)$, we consider the
space
\begin{equation*}
\calC_{\sqcup}^{\pos}(M\times I)_{g}\subset \calR_{\sqcup}(M\times I)_{g}
\end{equation*}
of all positive scalar curvature concordances $\bar g$ which restrict
to $g$ on $M\times \{0\}$.

Informally speaking, ``three sides'' of the cylinder $M \times I$ are
fixed here when $\partial M \neq \emptyset$, and this gives rise to
the notation $\sqcup$ for denoting spaces of concordances.

Observe now that there is a natural diffeomorphism group which acts
(by pull-back) on the concordance space. If $M$ is closed, we need to
consider diffeomorphisms $\theta: M \times I \to M \times I$ for which
the $\infty$-jet agrees with that of the identity when restricted to
$M \times \{0\},$ and agrees with that of the product diffeomorphism
$\theta|_{M \times \{1\}} \times \text{id}_I$ on the ``upper''
boundary $M \times \{1\}$. Similarly, in the case of a manifold with
boundary we want the $\infty$-jet to agree with the identity when
restricted to the boundary submanifold (\ref{ast}), and agree with
$\theta|_{M \times \{1\}} \times \text{id}_I$ at $M \times \{1\}.$ In
either case, we will denote this group of diffeomorphisms by
$\Diff_{\sqcup}(M \times I)$. This is the group of pseudoisotopies of
the manifold $M$.

There is a stabilization map $ s: \Diff_{\sqcup}(M \times I)\to
\Diff_{\sqcup}(M \times I\times I) $ given by crossing with the
interval $I$ (and unbending corners).  The \emph{smooth pseudoisotopy
stable range} is the function
\begin{equation}\label{phi-n}
  \phi(n) := \max\left\{\ k \ | \ 
  \mbox{the map $s$ is $k$-connected for all $n$-manifolds $M$}\right\}.
\end{equation}
\begin{remark}
It is classical result that $\phi(n)\geq
\min\{\frac{n-7}{2},\frac{n-4}{3}\}$ (see Igusa \cite{I}). Recently
Randal-Williams proved that there is an upper bound $\phi(n)\leq n-2$
if $n$ is even and $n\geq 6$, see \cite{R-W}.
\end{remark}
Returning to concordances, we can now form the quotient space
\begin{equation*}
\calM^{\pos}_{\sqcup} (M \times I)_g:=\calC^{\pos}_\sqcup(M
\times I)_g/\Diff_\sqcup(M \times I).
\end{equation*}
This is the \emph{moduli space of positive scalar curvature
  concordances}, which will be our principal focus in this
paper. Similarly, we can form the moduli space
\begin{equation*}
\calM^{\pos}_\partial(M \times I)_{g_0,g_1}:=\calR_\partial(M \times I)_{g_0,g_1}/\Diff_\partial(M \times I),
\end{equation*}
where $\Diff_\partial(M \times I) \subset \Diff_\sqcup(M \times I)$ is
the subgroup consisting of diffeomorphisms which also agree with the
identity up to infinite order at $M \times \{1\}$. In the sequel, we
will often fix a metric $\tilde g$ on $\partial(M \times I)$ and
simply write $\calM^{\pos}_\partial(M \times I)_{\tilde{g}}.$
 
Before proceeding further, let us give precise definitions of some of
the concepts we will be working with in this paper. We begin with the
notions of curvature referred to in the opening paragraph.
\begin{definition}\label{intermediate}
Given an integer $1\leq k\le n$, we say that a
  Riemannian manifold $(M,g)$ has \emph{$k$-positive Ricci
curvature}, denoted $Sc_k>0$, if for all $p\in M$ the Ricci tensor
$\Ric$ on $T_p M$ is $k$-positive, i.e. the sum of any $k$ eigenvalues
is positive.
\end{definition}
Note that the condition $Sc_k>0$ interpolates between positive Ricci
curvature (when $k=1$) and positive scalar curvature (when $k=n$).
Clearly the condition $Sc_{k-1}>0$ implies $Sc_{k}>0$ if $k\leq n$.

When considering these intermediate curvatures, we will define
(moduli) spaces of metrics and concordances in the same way as for
positive scalar curvature. There is, however, one point of difference
that we need to be mindful of: if $(M,g)$ has $Sc_k>0$, then the
corresponding product metric on $M \times I$ only has $Sc_{k+1}>0.$

Let us fix $k$ with $2\leq k \leq n-1$. Then depending on whether
$\partial M \neq \emptyset$ or $\partial M=\emptyset,$ we denote by
$\calR_{\del}^{\pos_k}(M)_{h}\subset \calR_{\del}(M)_{h}$ respectively
$\calR^{\pos_k}(M)\subset \calR(M)$, the space of metrics $g$ on $M$
with $Sc_{k}(g)>0$.  Since our primary focus in this paper is
concordances, we will define the space of $k$-positive Ricci curvature
concordances,
\begin{equation*}
  \calC_{\sqcup}^{\pos_k}(M\times I)_{g}\subset \calR_{\sqcup}(M\times I)_{g}
\end{equation*}
to be the space of concordances $\bar g$ satisfying $Sc_k(\bar
g)>0$. Note that this implies that the metric $g$ on $M \times \{0\}$
must satisfy $Sc_{k-1}(g)>0$, hence the need for $k \ge 2.$

Again, the diffeomorphism group $\Diff_{\sqcup}(M\times I)$ acts on
the space $\calC_{\sqcup}^{\pos_k}(M\times I)_{g}$, and we obtain the
orbit space
\begin{equation*}
\calM^{\pos_k}_{\sqcup} (M \times I)_g:=\calC^{\pos_k}_\sqcup(M
\times I)_g/\Diff_\sqcup(M \times I).
\end{equation*}  
The space $\calM^{\pos_k}_{\sqcup} (M \times
I)_g$ is the moduli space of $Sc_k>0$ concordances `based' at $g$.

In the case where $\partial M=\emptyset$ we make the obvious modifications to the above and use the same notation. To simplify matters, we would like to adapt the following
\begin{Convention}
We choose either $k=n$ or $2\leq k \leq n-1$. Then we will use the
notation $Sc_*>0$ instead of $Sc_k$ and the notations
$\calR^{\pos_*}(M)$, $\calR_{\del}^{\pos_*}(M)_{h}$,
$\calC^{\pos_*}_{\sqcup}(M \times I)_g$, $\calM^{\pos_*}_{\sqcup}(M
\times I)_g$, etc, for the corresponding spaces of metrics and moduli
spaces of metrics.  Thus our notation will not distinguish between
positive scalar curvature and $k$-positive Ricci curvature ($k \neq
1$). Note that if we write $\bar{g} \in \calC_\sqcup^{\pos_*}(M \times
I)_g$ say, where we understand $\pos_*$ to mean $Sc_k>0,$ in writing
$g \in \calR^{\pos_*}(M)$ it is implicit that the latter $\pos_*$
should be taken to mean $Sc_{k-1}>0.$
\end{Convention}
As an aside, we remark that for a manifold with boundary $M$,
demanding that the metric at the boundary agrees up to infinite order
with a product $h+dr^2$ aligns with convention, and in particular with
\cite{BW}. However, this restriction is actually not necessary. We
wish to fix the form of the metric up to infinite order at the
boundary, but the precise form of this has no consequences in the
sequel. For example, if $M=D^n$, then we could choose the metric at
the boundary to agree with that of a strictly convex round `polar
cap'.

Our main result is the following.
\begin{maintheorem}\label{mainA}
  For any positive integer $q$, there exists $N(q)$, with $N(q) \to
  \infty$ as $q \to \infty$, such that if $M$ is a compact manifold
  of even dimension $n>N(q)$ which admits a
  metric with $Sc_*>0$, then
\begin{equation}\label{tm:main} 
  \pi_{4q}\calM^{\pos_*}_{\sqcup}(M \times I)_g \otimes \Q \neq 0
\end{equation} 
for any metric $g\in \calR^{\pos_*}_{\del}(M)_{h}$. 
\end{maintheorem}
\begin{remark}
We again emphasize that $M$ could be a closed manifold or a manifold
with boundary. It is well-known that the space $\calR^{\pos}(M)_{h}$
is weak-homotopy invariant under admissible surgeries. At the same
time, the topology of the positive scalar curvature moduli space
$\calM^{\pos}_{\del} (M)= \calR^{\pos}_{\del}(M)_{h}/\Diff_{\del}(M)$
may change dramatically after surgery.  Similarly, the concordance
space $\calR^{\pos}_\sqcup(M \times I)_g$ is weakly-homotopy invariant
under such surgeries, but we cannot expect a similar result to be true
for the moduli space $\calM^{\pos}_{\sqcup}(M \times I)_g$. However, the
Main Theorem holds for an arbitrary manifold $M$ of even dimension.
\end{remark}  
\begin{remark}
In \cite{BW}, the first author and Watanabe studied the moduli space
$\calM^{\pos}_{\del} (D^n)_{h_{st}}$, where $h_{st}$ is the standard
round metric on $S^{n-1}$.  The main result in this direction is that
for $n \ge 4$ and $k \ge 2$, surgery on trivalent graphs produces
non-trivial elements in $\pi_{k(n-3)}\BDiff_\partial (D^n) \otimes
\Q$, and these elements lift to non-zero elements in
$\pi_{k(n-3)}\calM^{\pos}_{\del}(D^n)_{h_{st}}\otimes \Q$. In turn,
the latter elements give rise to non-zero elements in
$\pi_{k(n-3)}\calM^{\pos}_\sqcup(D^n \times I)_g \otimes \Q$
demonstrating the topological non-triviality of the moduli space of
positive scalar curvature concordances of the disc. It is important to
emphasize that these results hold in the ``unstable range''. It is
natural to expect that similar facts could be established for an
arbitrary manifold $M$. However, those elements seem vanish under a
natural map
\begin{equation*}
  D^{2\ell} \rh D^{2\ell}\cup_{S^{2\ell-1}\times \{0\}} [(S^{2\ell-1}\times I)\#_m
    (S^{\ell}\times S^{\ell})]
\end{equation*}  
which takes connected sums with $S^{\ell}\times S^{\ell}$ for large enough $m$.
\end{remark}
\begin{remark}
  Clearly our notion of concordance does not make sense for
  Ricci-positive metrics, since the product structure near the
  boundary produces a zero eigenvector for the Ricci tensor: this is
  why we exclude the case $k=1$.  We also note that Burdick has
  introduced a viable notion of positive Ricci curvature concordance
  in \cite{B}, however at the time of writing, this concept has yet to
  be explored in a significant way.
\end{remark}
The proof of Main Theorem ultimately rests on certain models for
classifying spaces, and constructions made with these models. These
constructions involve Hatcher bundles, from where the non-triviality
in rational homotopy arises. More specifically, we use the existence
of Hatcher bundles with fibrewise positive Ricci curvature metrics
established in \cite{BWW}. However see also \cite{BHSW}, where the
role played by Hatcher bundles is closer to that in the current paper.

This paper is laid out as follows. In Section \ref{models} we
construct the models for classifying spaces discussed above. The most
challenging technical aspect to this involves showing that certain
spaces of embeddings are contractible, (see Lemma \ref{contractible}),
and the arguments here might be of independent interest. In Section
\ref{lifting} we review the implications for homotopy groups which
derive from the existence of fibrewise positively curved metrics on
disc bundles. Section \ref{Hatcher} discusses Hatcher bundles and
establishes the main properties of these objects that we will
need. Finally in Section \ref{concordance} we prove Main Theorem 
by a combination of further constructions and diagram-chasing
arguments.

The foundations of this paper were laid when the second author was
visiting the University of Oregon, and he would like to thank the
University of Oregon for their hospitality and support. The first author
was partially supported by Simons collaboration grant 708183.


\section{Models for classifying spaces}\label{models}
In this section we will study two classifying spaces:
$\text{BDiff}_\partial(M \times I)$ and $\text{BDiff}_\sqcup(M \times
I)$. Let us first consider $\text{BDiff}_\sqcup(M \times I).$ We will
present two models for this space.

Consider the space of all Riemannian metrics on $M \times I$, assuming
an infinitessimal product structure in the $I$-direction at each of
the boundary components $M \times \{0\}$ and $M \times \{1\}$, and in
the case where $M$ has boundary, also in the inward normal directions
at all points of $\partial M \times I$. Assume the metric agrees up to
infinite order with some product $g+dt^2$ at the
lower boundary $M \times \{0\}$, and if $\partial M \neq \emptyset$,
with a product $h+dr^2+dt^2$ near $\partial M \times
I$, where $h=g|_{\partial M}$ and
$r$ is the inward normal parameter in each `slice' $M \times
\{t\}$. We denote this space by
  $\calR_\sqcup(M \times I)_g$, as above, and observing that it is both
contractible and acted upon freely by $\text{Diff}_\sqcup(M \times
I)$, we arrive at our first model for the classifying
space:
\begin{equation*}
\text{BDiff}_\sqcup(M \times I)=\calR_\sqcup(M \times
I)_g/\text{Diff}_\sqcup(M \times I).
\end{equation*}
Our second model is a space of embeddings of $M \times I$ into
$\R^\infty$.  Fix an embedding $e:M \rh \R^d$ for some $d$, extend to
an embedding $\tilde{e}:M \times I \rh \R^{d+1}$ given by
$\tilde{e}(x,t)=(e(x),t)$, and define the embedding $\bar{e}:M \times
I \rh \R^{\infty}$ as the composition
\begin{equation*}
M \times I
\lhook\joinrel\xrightarrow{\ \tilde{e}\ } \R^{d+1}=\R^{d+1} \times \{0\}
\hookrightarrow
\R^{\infty}.
\end{equation*}
Consider the space of embeddings $\text{Emb}_\sqcup(M \times
I,\R^{\infty})$ consisting of those embeddings $\phi:M \times I \to
\R^{\infty}$ for which the $\infty$-jet at the lower boundary $M=M
\times \{0\}$ agrees with that of $\bar{e}$, and if $\partial M \neq
\emptyset$ we also demand that the embeddings agree up to infinite
order with $\bar{e}$ along the `sides' $\partial M \times I$. We will
also assume that the upper boundary agrees with that of an embedding
taking the form $\psi(x,t)=\tau_{(t-1)v} \circ \phi(x,1),$ where
$\tau_w:\R^\infty \to \R^\infty$ is translation through the vector $w
\in \R^\infty$, and $v$ is any vector transverse to all tangent spaces
of $\phi(x,1).$ In other words, we require the `top' of the embedding
to have an infinitessimal product-like structure.

Similarly, let $\text{Emb}_\partial(M \times I,\R^{\infty})\subset
\text{Emb}_\sqcup(M \times I,\R^{\infty})$ be the subset for which the
$\infty$-jet at the upper boundary $M \times \{1\}$ also agrees with
that of $\bar{e}$.
\begin{lemma}\label{contractible}
The embedding spaces $\Emb_\sqcup(M \times I,\R^{\infty})$ and
$\Emb_\partial(M \times I,\R^{\infty})$ are contractible.
\end{lemma}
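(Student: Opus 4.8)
The plan is to show that each of these embedding spaces deformation retracts onto a point by the standard ``push everything off to infinity'' technique, adapted to respect the boundary constraints. First I would recall that the unconstrained embedding space $\Emb(M \times I, \R^\infty)$ is contractible by Whitney-type arguments (any two embeddings into $\R^\infty$ are isotopic, and in fact the space is contractible because one can always add an extra coordinate and rotate). The subtlety here is that we have prescribed $\infty$-jets on the subcomplex $(M \times \{0\}) \cup (\partial M \times I)$ and a product-like constraint at $M \times \{1\}$, so we cannot directly quote that fact; instead we must build a contraction that fixes these jets throughout.

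I would proceed in two stages. Stage one: given $\phi \in \Emb_\sqcup(M \times I, \R^\infty)$, use a collar of the constrained boundary piece together with a partition of unity to linearly interpolate, on a shrinking collar neighborhood, between $\phi$ and the model embedding $\bar e$, keeping the $\infty$-jets on the boundary fixed at each time (this is where one uses that $\bar e$ is itself a product near that part of the boundary, so the interpolation stays an embedding for a sufficiently thin collar). The analogous interpolation must be done at $M \times \{1\}$ using the product-like form $\psi(x,t) = \tau_{(t-1)v}\circ \phi(x,1)$, which means we are simultaneously contracting the ``tangential'' embedding $\phi(\cdot,1): M \to \R^\infty$; but that map, subject to a fixed $\infty$-jet on $\partial M$, lives in a space of the same contractible type, one dimension down, so one sets up the argument so that the contraction of the top slice is inherited from a contraction of $\Emb_\partial(M, \R^\infty)$ (or rather the relative version), handled by the same machinery or by induction on dimension. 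Stage two: once $\phi$ has been deformed so that it agrees with $\bar e$ on an honest (not merely infinitesimal) collar of $(M\times\{0\})\cup(\partial M\times I)$ and has genuine product form on a collar of $M \times \{1\}$, the remaining embedding problem is rel a codimension-zero submanifold-with-the-constraint-realized, and the standard contraction of $\Emb$ into $\R^\infty$ (spreading the image out along new coordinate axes, then straight-line homotopy to $\bar e$ in the now-huge ambient space, all of which can be done keeping a fixed collar pointwise fixed) finishes the job. For $\Emb_\partial$ the argument is identical, with the top slice treated exactly like the bottom slice.

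The main obstacle I expect is handling the $\infty$-jet conditions cleanly: a naive straight-line homotopy between two embeddings with the same $\infty$-jet along a submanifold need not consist of embeddings near that submanifold, so one must be careful to work inside a collar whose thickness one is allowed to shrink as a function of the homotopy parameter, and to check that the interpolated maps remain immersions (hence, being close to $\bar e$ and injective, embeddings) throughout — this requires a uniform bound coming from compactness of $M \times I$ together with the fact that the difference $\phi - \bar e$ vanishes to infinite order on the constrained locus. A secondary technical point is the interaction of the corner structure of $M \times I$ (present when $\partial M \neq \emptyset$) with the collaring arguments; this is dealt with by choosing collars that are compatible with the corner, i.e. by working with a collar of the whole face $(M\times\{0\})\cup(\partial M \times I)$ as a manifold with corners, which exists by the usual corner-collar theorems. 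Everything else — the spreading-out-along-new-axes contraction of $\Emb(\,\cdot\,,\R^\infty)$ and the reduction of the top-slice contraction to the lower-dimensional case — is routine once these points are in place.
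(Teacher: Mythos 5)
Your overall architecture (spread the embedding off into a ``sparse'' set of coordinates, then linearly interpolate to a fixed embedding) is close in spirit to the paper's, but stage two as you have written it has a genuine gap, and it is exactly the gap that the paper's proof is built around. After spreading $\phi$ along new axes \emph{while keeping the collar pointwise fixed}, the resulting embedding $\phi'$ cannot have been pushed off of $\bar e$'s coordinates: on the collar $\phi'$ still equals $\bar e$, and since the spreading isotopy is a global linear map (it must be, or it would not preserve the jet conditions), the coordinates $1,3,\dots,2d+1$ carrying $\bar e$ are necessarily preserved everywhere, including off the collar. Thus $\phi'$ and $\bar e$ are \emph{not} supported on disjoint coordinate sets away from the boundary, and the straight-line $(1-s)\phi' + s\bar e$ has no reason to stay injective for $s\in(0,1)$; one can easily cook up a collision (e.g.\ $\phi'_1(p)=2,\phi'_1(q)=0,\bar e_1(p)=0,\bar e_1(q)=2$, other entries equal, gives a crossing at $s=1/2$). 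The classical ``odd/even'' trick works precisely because the target embedding is in coordinates disjoint from the source, and that disjointness is destroyed by the boundary condition which pins both of them to $\bar e$ on the constrained locus.

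The paper's key idea, which your proposal is missing, is to replace $\bar e$ as the target of the interpolation by a modified embedding $\phi_0$ whose odd coordinates reproduce $\bar e$ but whose \emph{even} coordinates are multiplied by a cutoff $f_1$ (and $f_2$ when $\partial M\neq\emptyset$) vanishing to infinite order on the constrained boundary, yet chosen so that the even-coordinate projection $\pi_{\mathrm{even}}\circ\phi_0$ is itself an embedding on the complement of the boundary. Then $(1-s)\phi' + s\phi_0$ has even part $s\,\pi_{\mathrm{even}}\phi_0$, which supplies the injectivity away from the boundary, while near the boundary the interpolation is $C^\infty$-close to $\phi'$ and hence an embedding, and the two regions are separated by looking at a single even coordinate. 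Your proposal contains no analogue of $\phi_0$, and ``straight-line homotopy to $\bar e$'' does not work without it. Secondarily: your stage one (upgrading the $\infty$-jet agreement to agreement on an honest collar) is not actually needed, because the paper's linear isotopy $\theta_s$ and the linear interpolation to $\phi_0$ both respect the $\infty$-jet condition directly; and the top-slice condition at $M\times\{1\}$ is handled in the paper by checking directly that $\phi_0$ and the two homotopies preserve the product-like structure there, which avoids the induction on dimension you invoke without proof.
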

We will postpone the proof of this Lemma until the
 end of this section.

Next, observe that the group of pseudoisotopies
$\text{Diff}_\sqcup(M\times I)$ acts freely on the space
\begin{equation*}
  \text{Emb}_\sqcup(M \times I,\R^{\infty}),
\end{equation*}
and thus we obtain a model for the classifying space
$\text{BDiff}_\sqcup(M\times I)$: $$\text{BDiff}_\sqcup(M\times
I)=\text{Emb}_\sqcup(M \times
I,\R^{\infty})/\text{Diff}_\sqcup(M\times I).$$ We conclude that any
element of $\text{BDiff}_\sqcup(M\times I)$ can be identified with the
{\it image} of an embedding belonging to $\text{Emb}_\sqcup(M \times
I,\R^{\infty})$.

In a similar fashion, observe that $\text{Emb}_\partial(M \times
I,\R^{d+\infty})$ is acted upon by $\text{Diff}_\partial(M \times I),$
the group of diffeomorphisms which agree up to infinite order with the
identity at all boundary points, and that this action is free. Thus we
arrive at the model
\begin{equation*}
  \text{BDiff}_\partial(M \times
I)=\text{Emb}_\partial(M \times
I,\R^{\infty})/\text{Diff}_\partial(M\times I).
\end{equation*}
For the metric analogue of this last classifying space, we consider
the space $\calR_\partial(M \times I)_g$ of all
Riemannian metrics $\bar g$ on $M \times I$ which
agree up to infinite order with a product metric $g+dt^2$ on both $M
\times \{0\}$ and $M \times \{1\}$. If $\partial M \neq \emptyset,$ we
also demand that the metric agrees up to infinite order with
$h+dr^2+dt^2$ along $\partial M \times I$, with
  $h=g|_{\del M}$. Again, this space is contractible and is acted
upon freely by $\text{Diff}_\partial(M \times I)$. Hence we
obtain
\begin{equation*}
  \text{BDiff}_\partial(M \times I)=\calR_\partial(M\times
I)_g/\text{Diff}_\partial(M \times I).
\end{equation*}
In the sequel we will also need to consider the classifying space
$\text{BDiff}_\partial (D^n)$. We can realize this space via both
embedding and metric models, in a manner analogous to
$\text{BDiff}_\partial(M \times I)$.  For the embedding model, we
consider the embedding $\iota: D^n \rh \R^{n+1}\times\{0\}\subset
\R^\infty$ as the inclusion of a standard unit round hemisphere, and
then let $\text{EDiff}_\partial(D^n)$ be the space of all embeddings
of $D^n$ into $\R^\infty$ which agree up to infinite order with
$\iota$ at the boundary. 
  
Similarly, in the metric model, we assume that all metrics in
$\text{EDiff}_\partial(D^n)$ agree up to infinite order with the unit
round hemisphere metric at the boundary.\footnote{
  \ We will omit the boundary metric in the notation for this model.} We
have a free action of $\text{Diff}_\partial(D^n)$ on either of these
contractible spaces, with the classifying space being the resulting
quotient. Note that in either model, there is nothing special about
the choice of a hemisphere here, other than for the convenience of
language. We could, for example, consider round `polar caps' of any
radius. We will, in fact, need to do this in Section \ref{m+}.

In later constructions, it will be convenient to move freely between
embedding and metric models for the classifying spaces we
consider. This is facilitated by a natural map from the embedding to
the metric model, which we will now describe. Given a manifold $N$,
(which will represent either $M \times I$ or $D^n$), let
$\BDiff^{\e}_\partial (N)$ denote the embedding model, and
$\BDiff^{\m}_\partial (N)$ the metric model.

An element $y \in \BDiff^{\e}_\partial (N)$ corresponds to an embedded
submanifold in $\R^\infty$ with certain boundary conditions, which is
the image of the any of the embeddings represented by $y$. We can
restrict the Euclidean inner product on $\R^\infty$ to obtain an
induced Riemannian metric on the embedded submanifold. This metric can
be pulled back by any of the embeddings determined by $y$ to obtain an
isometry class of metrics on a standard copy of $N$ satisfying
appropriate boundary conditions. This latter class represents an
element in $\BDiff^{\m}_\partial (N),$ and so we obtain a \emph{metric
restriction map} $\xi:\BDiff^{\e}_\partial (N) \to
\text{BDiff}^{\m}_\partial (N).$
\begin{lemma}\label{embedding_to_metric}
The map $\xi:\BDiff^e_\partial (N) \to \BDiff^{\m}_\partial (N)$ is a
homotopy equivalence.
\end{lemma}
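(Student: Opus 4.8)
The plan is to exhibit $\xi$ as a map between two models for $B\Diff_\partial(N)$ which is covered by a $\Diff_\partial(N)$-equivariant map of (weakly) contractible total spaces, and then to invoke the standard principle that such a map is a (weak) homotopy equivalence.

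Write $G:=\Diff_\partial(N)$, where $N$ denotes $M\times I$ or $D^n$. By construction $\BDiff^{\e}_\partial(N)=\EDiff_\partial(N)/G$ and $\BDiff^{\m}_\partial(N)=\calR_\partial(N)_g/G$, where $\EDiff_\partial(N)$ is the relevant embedding space and $\calR_\partial(N)_g$ the relevant space of metrics with prescribed $\infty$-jet along the boundary. The space $\calR_\partial(N)_g$ is convex — it is the intersection of the open convex cone of positive-definite symmetric $2$-tensors with the affine subspace of tensors having a fixed $\infty$-jet along the boundary — hence contractible, and $\EDiff_\partial(N)$ is contractible by Lemma \ref{contractible}. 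The metric restriction map $\xi$ is, by its very definition, covered by the $G$-equivariant map
\[
\mathsf{p}\colon\EDiff_\partial(N)\longrightarrow\calR_\partial(N)_g,\qquad \phi\longmapsto\phi^{*}\langle\cdot,\cdot\rangle,
\]
the pullback of the Euclidean inner product. Here one checks the routine points that $\phi(N)$ is compact, hence contained in some $\R^{d}\subset\R^{\infty}$, so that $\phi^{*}\langle\cdot,\cdot\rangle$ is a genuine smooth metric on $N$; that it has the prescribed boundary $\infty$-jet because the reference embedding ($\bar e$ for $M\times I$, $\iota$ for $D^{n}$) was chosen precisely so that its induced metric is the fixed boundary metric; and that $\mathsf{p}(\phi\circ\psi)=\psi^{*}\mathsf{p}(\phi)$ for $\psi\in G$, so that $\mathsf{p}$ descends to $\xi$ on quotients.

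Next I would use that the two $G$-actions admit local sections, so that
\[
G\to\EDiff_\partial(N)\to\BDiff^{\e}_\partial(N)\qquad\text{and}\qquad G\to\calR_\partial(N)_g\to\BDiff^{\m}_\partial(N)
\]
are principal $G$-bundles, in particular Serre fibration sequences. The triple $(\mathrm{id}_G,\mathsf{p},\xi)$ is then a map of fibration sequences which is the identity on fibers and a weak homotopy equivalence on total spaces (being a map between weakly contractible spaces). Chasing the two long exact sequences of homotopy groups and applying the five lemma shows that $\xi$ is a weak homotopy equivalence; equivalently, since $\EDiff_\partial(N)$ and $\calR_\partial(N)_g$ are weakly contractible the classifying maps $\BDiff^{\e}_\partial(N)\to BG$ and $\BDiff^{\m}_\partial(N)\to BG$ are weak equivalences, $\xi$ classifies the pullback of one principal bundle to the other, and hence commutes with these maps up to homotopy, forcing it to be a weak equivalence. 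Finally, all the spaces involved have the homotopy type of CW complexes — the total spaces as Fr\'echet manifolds of smooth maps and tensors (Milnor), and the quotients being weakly equivalent to $B\Diff_\partial(N)$ — so Whitehead's theorem upgrades this to a genuine homotopy equivalence.

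I expect the main technical obstacle to be the local-section (slice) statements needed to make the two quotient maps into honest fiber bundles in the presence of the $\infty$-jet boundary conditions: for $\EDiff_\partial(N)$ one wants a tubular-neighborhood-type slice for the $G$-action on embeddings relative to the boundary, and for $\calR_\partial(N)_g$ a boundary-adapted version of Ebin's slice theorem. The other point requiring genuine (if routine) care is ensuring that $\mathsf{p}$ actually lands in $\calR_\partial(N)_g$, i.e. that the induced metric has exactly the prescribed boundary $\infty$-jet and not merely the correct germ up to a boundary-fixing isometry — which is the reason the reference embeddings were chosen to be round hemispheres, product embeddings, and the like.
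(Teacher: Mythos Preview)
Your proposal is correct and shares the paper's core idea: lift $\xi$ to the $\Diff_\partial(N)$-equivariant map $\mathsf{p}=\bar\xi$ between the contractible total spaces $\EDiff^{\e}_\partial(N)$ and $\calR_\partial(N)_g$, and then deduce that $\xi$ is a homotopy equivalence. The divergence is in how that last deduction is made. You argue via the map of fibration sequences $(\mathrm{id}_G,\mathsf{p},\xi)$, the five lemma, and Whitehead; this requires only that $\mathsf{p}$ be equivariant and the total spaces weakly contractible. The paper instead argues that $\bar\xi$ is a $\Diff_\partial(N)$-equivariant \emph{bijection} on each fibre, so that $\xi^*(\EDiff^{\m}_\partial(N))\cong\EDiff^{\e}_\partial(N)$ as principal bundles; since both are universal, the general fact that any map between models of $BG$ which pulls one universal bundle back to the other is a homotopy equivalence finishes the argument. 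Your route is slightly more robust (no fibrewise bijectivity needed) at the cost of invoking Whitehead and CW structure; the paper's route is a bit more concrete and sidesteps those issues, but uses the extra observation that an isometry in $\Diff_\partial(N)$ must be the identity. The technical caveats you flag about slices and boundary $\infty$-jets are equally present (and equally suppressed) in the paper's version.
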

Before proving this, let us recall that there is a bijection between
principal $G$-bundles over a space $X$ and the set of homotopy classes
$[X,BG]$, given by pulling back the universal bundle $EG \to BG.$ In
particular, if $X$ is an alternative model for $BG$, then the
universal $G$ bundle over $X$ can be obtained by pulling back the
universal bundle over $BG$ via some map $f$. Similarly, the universal
bundle over $BG$ can be obtained by pulling back the universal bundle
over $X$ by some map $g$. Clearly, pulling back by the compositions $f
\circ g$ and $g \circ f$ has the same effect as pulling back by the
respective identity mappings, hence these compositions are homotopic
to the identity. Thus $f$ and $g$ are homotopy equivalences between
the two models. To prove the above lemma, it therefore suffces to show
that the pull-back of $\EDiff_\partial^{\m}(N) \to
\BDiff_\partial^{\m}(N)$ along $\xi$ is the bundle
$\EDiff_\partial^{\e}(N) \to \BDiff_\partial^{\e}(N).$

Note that a fibre of $\EDiff_\partial^{\e}(N) \to
\BDiff_\partial^{\e}(N)$ consists of the collection of embeddings with
a given common image, and a fibre of $\EDiff_\partial^{\m}(N) \to
\BDiff_\partial^{\m}(N)$ is the set of metrics which make up a
given isometry class.
\begin{proof}[Proof of Lemma \ref{embedding_to_metric}]
As noted above, it suffices to show that the pull-back of
the fibre bundle $\EDiff_\partial^{\m}(N) \to
\BDiff_\partial^{\m}(N)$ by $\xi$ coincides with the
  bundle $\EDiff_\partial^{\e}(N) \to \text{BDiff}_\partial^e(N).$

To this end, we simply observe that the metric restriction map $\xi$
defined above is covered by a map
$\bar{\xi}:\EDiff^{\e}_\partial(N) \to \EDiff^{\m}_\partial(N)$,
which maps a given embedding $\phi$ to the metric on the standard copy
of $N$ obtained by pulling back the induced metric on $\phi(N)$ via
$\phi$. Clearly, $\bar{\xi}$ is a smooth
$\Diff_\partial(N)$-equivariant bijection between the fibre over
any $y \in \BDiff^{\e}_\partial (N)$ and the fibre over $\xi(y) \in
\BDiff_\partial^{\m}(N).$ Thus $\bar{\xi}$ induces an equivariant
diffeomorphism between the fibre over $y$ in
$\EDiff^{\e}_\partial(N)$ and the fibre over $y$ in
$\xi^*(\EDiff^{\m}_\partial(N)).$ Thus the bundles
$\EDiff^{\e}_\partial(N)$ and $\xi^*(\EDiff^{\m}_\partial(N))$
are equivalent, as claimed.
\end{proof}
We conclude this section with the proof of Lemma
\ref{contractible}. To preface this, note that the space of embeddings
of a given manifold in $\R^\infty$ without any boundary conditions is
known to be contractible. See for example \cite[Appendix A]{H}.  The
argument is essentially this: compose the embeddings with the obvious
linear homotopy between the identity map of $\R^\infty$ and the map
which sends the $k^{th}$ basis vector to the $(2k-1)^{st}$. This
pushes all embeddings into odd dimensions. One then fixes an embedding
into even dimensions, and slides the odd dimensional embeddings
to the fixed even dimensional embedding by a second obvious linear
homotopy. In our case, however, we need to retain boundary conditions
throughout such a construction, and this leads to technical
difficulties which the proof below shows how to overcome.
\begin{proof}[Proof of Lemma \ref{contractible}.]
The aim here is to construct a homotopy of the space of embeddings
\begin{equation*}
\Emb_\bullet(M \times I,\R^{\infty}),
\end{equation*}  
(where $\bullet=\sqcup,\partial$), through $\text{Emb}_\bullet(M
\times I,\R^{\infty})$, to a fixed embedding, thus demonstrating the
desired contractibility.

Let $e:M \to \R^d$, $\tilde{e}:M \times I \to \R^{d+1}$ and $\bar{e}\in \Emb_\bullet(M \times I,\R^{\infty})$ be the 
embeddings defined at the start of this section. Without loss of generality, suppose that the target
space $\R^{d+1}$ of the embedding $\tilde{e}$ is embedded into
$\R^\infty$ as the first $d+1$ odd dimensions. In other words, if we
write $e=(e_1,...,e_d),$ so that $\tilde{e}(x)=(e_1(x),...e_d(x),t),$
then $\bar{e}(x)=(e_1(x),0,e_2(x),0,...,e_d(x),0,t,0...)$. 
Let us be clear on the role that the embedding $\bar{e}$ will play in this proof. Firstly of course, it
provides us with the boundary conditions common to all embeddings in
$\text{Emb}_\bullet(M\times I,\R^{\infty})$. Secondly, in the
paragraph below, we will show how to deform an arbitrary embedding
$\phi \in \text{Emb}_\bullet(M\times I,\R^{\infty})$ into an embedding
into odd dimensions only, and this homotopy will use the dimension $d$
which arises from the initial embedding $e$. Finally, we will show
that $\text{Emb}_\bullet(M\times I,\R^{\infty})$ can be continuously
deformed into a one-point space whose single element is an embedding
obtained via a slight modification of $\bar{e}$.

For the first part of the construction, we will exhibit a homotopy which will deform each embedding into odd dimensions, whilst preserving the boundary conditions. We will achieve this via a linear isotopy of $\R^{\infty}$. As a preliminary to defining this isotopy, consider the following injective map $f:\N \to \N$ given by
$$f(n)=
\begin{cases}
n \text{ for } n \text{ odd, } 1 \le n \le 2d+1; \\
2d+n+1 \text{ for } n \text{ even, } 2 \le n \le 2d; \\
2n-1 \text{ for } n \ge 2d+2. \\
\end{cases}
$$
Now define the isotopy $\theta_s:\R^{\infty} \to \R^{\infty}$, $s \in [0,1],$ as follows: for any $v \in \R^\infty$ with $i^{th}$ entry $v_i$, we map $$v_i \mapsto (1-s)v_i+sv_{f^{-1}(i)},$$ where we interpret $v_{f^{-1}(i)}$ as 0 if $i \not\in \text{im}f.$ We claim that this isotopy is an injection for each $s$. To see this, suppose that $v,w \in \R^\infty$ differ in the $i^{th}$ entry. If $i$ is even, or $i$ is odd and lies either in the in the range $1 \le n \le 2d+1$ (which is fixed by $f$) or in the range $2d+3 \le i \le 4d-1$ (which is not in the image of $f$), then by definition of $\theta_s$ we see immediately that the $i^{th}$ entries differ for all $s \in [0,1]$. This leaves the possibility that $i$ is odd and $i \ge 4d+1$. In this case, $(1-s)v_i+sv_{f^{-1}(i)} =(1-s)w_i+sw_{f^{-1}(i)}$ for some $s\in [0,1]$ only if that $v_{f^{-1}(i)} \neq w_{f^{-1}(i)}.$ Repeating the above arguments for the entries in position $f^{-1}(i)$, and iterating as necessary, we must eventually come to situation where the entries are different for all $s$, from which the injectivity of $\theta_s$ follows.

Given that $\theta_s$ is injective, the composition of any embedding into $\R^{\infty}$ with $\theta_s$ is an embedding for every $s$. Moreover $\theta_1$ is clearly a map into odd dimensions, as required.

Turning our attention to boundary conditions, note that for all $\phi \in\text{Emb}_\bullet(M \times I,\R^{\infty}),$ $\theta_s \circ \phi$ agrees with $\phi$ in all odd dimensions $n$ with $1 \le n \le 2d+1$. In the remaining dimensions, each entry in the image of $\theta_s \circ \phi$ is a linear combination of those for $\phi$ from dimensions excluding the odd dimensions $1 \le n \le 2d+1$. Given that $\phi$ agrees up to infinite order on the boundary (or part of the boundary) with $\bar{e}$, it is automatic that $\theta_s \circ \phi$ satisfies the same boundary conditions as $\phi$. Similarly, if there are no fixed boundary conditions on $M \times \{1\}$, then it  is a triviality that composition with $\theta_s$ preserves the infintessimal product-like structure at the upper boundary.

Thus for first stage in the homotopy of our space of embeddings, we set $$H_1(\phi(x,t),s)=\theta_s \circ \phi(x,t).$$

Having achieved a homotopy of the space $\text{Emb}_\bullet(M \times I,\R^{\infty})$ to a subspace of embeddings lying in odd dimensions, our final task is to show how to continuously deform this subspace onto a one-point space, i.e. how to deform every embedding (in a uniform way) to a fixed embedding in $\text{Emb}_\bullet(M \times I,\R^{\infty})$. We begin by defining this fixed embedding, which we will label $\phi_0$.

This fixed embedding will be a modification of $\bar{e}$. The aim of this modification is to `nudge' $\bar{e}$ into even dimensions away from the boundary. To achieve this deformation, we introduce the following function. Let $\psi:\R \to \R$ be a smooth function, strictly increasing on $[0,1]$, which satisfies the following conditions: $\psi(t)=0$ for $t \le 0$, $\psi(t)=1$ for $t \ge 1$, and $\psi^{(n)}(0)=\psi^{(n)}(1)=0$ for all $n \in \N$.

The definition of $\phi_0$ will depend on the type of boundary conditions under consideration. There are three cases: firstly, the case where $\partial M =\emptyset,$ so the boundary conditions apply to $M \times \{0\}$ and possibly $M \times \{1\}$ only. The second is where $\partial M\neq \emptyset$ and the boundary conditions apply to {\it all} boundary components, i.e. to $(M \times \{0\}) \cup (\partial M \times I) \cup (M \times \{1\}).$ The final case is where $\partial M \neq \emptyset,$ and the boundary conditions apply to all components {\it except} $M \times \{1\}$.

Let us initially assume that $\partial M =\emptyset$, so we have boundary conditions on $M \times \{0\}$ and perhaps also $M \times \{1\}$. Fix a very small $\epsilon>0$, and a background metric $g$ on $M$. First assume that $\partial M =\emptyset.$ With respect to the metric $g+dt^2$ on $M \times I$, consider the distance function $\delta:M \times I \to [0,\infty)$ defined by setting $\delta(x,t)$ to be the minimum distance with respect to $g+dt^2$ of the point $(x,t)$ from $M \times \{0\}$, or from $(M \times \{0\}) \cup (M \times \{1\}),$ depending on whether the upper boundary component is being considered. Notice that $\delta$ is independent of $x$, the function $f_1(x,t):=\psi(\delta(x,t)/\epsilon)$ is smooth on $M \times I$, and that $f_1$ vanishes up to infinite order on the boundary components which are subject to the boundary conditions.

Let us define $\phi_0$ as follows: $$\phi_0:=(e_1,f_1e_1,...,e_d,f_1e_d,t,tf_1,0,f_1,0,0,0...).$$ Notice that when $\phi_0$ is composed with the projection $\pi_{odd}:\R^\infty \to \R^\infty$ onto odd dimensions, we recover the map $\bar{e}$, which shows that $\phi_0$ is an embedding. Moreover, by construction $\phi_0$ agrees with $\bar{e}$ up to infinite order on the relevant boundary components. 

We also claim that composing $\phi_0$ with the projection map onto {\it even} dimensions yields an embedding provided we restrict to the complement of the `fixed' boundary components. The key to seeing this is to examine the entries in positions $2d+2$ and $2d+4$. 

First note that the $t$-derivative of $tf_1$ (in entry $2d+2$) is equal to 1 at all points at distance at least $\epsilon$ from the relevant boundary components, since $f_1$ is constant with value 1 at such points. At distance less than $\epsilon$, the entry in position $2d+4$ has non-zero derivative (away from fixed boundaries. Thus $\pi_{even}\circ \phi_0$ has full rank (i.e. rank 1) when restricted to the $t$-direction in $M \times I$.

Next, notice that taken together, this pair of entries $(tf_1,f_1)$ uniquely determine $t$, since $f_1>0$ away from the fixed boundaries. Thus we see that $\pi_{even}\circ \phi_0$ is an embedding if it is an embedding when restricted to each `slice' $M \times \{t\}$. But this is clear, since for each $t$ we have agreement with the embedding $e$ in the first $d$ coordinates up to scaling by $f_1$, which is constant on each slice. 

In the case where $M \times \{1\}$ is not subject to fixed boundary conditions, we also need to check the infinitessimal product structure at this boundary component. For $t \approx 1$ we have $f_1(x,t)=1$, and so $\phi_0(x,1)-\phi_0(x,t)=(0,...,0,1-t,1-t,0,0,...)$, showing the desired structure.

Having constructed our fixed embedding $\phi_0$, the last stage of the contractibility argument is to construct a second homotopy to show that our space of embeddings in odd dimensions has the same homotopy type as the one-point set $\{\phi_0\}.$ We do this by setting $$H_2(\phi(x,t),s)=(1-s)\phi(x,t)+s\phi_0(x,t),$$ where it is assumed that $\phi(x,t)$ lies in the image of the homotopy $H_1$. We claim that for each $s \in [0,1],$ $H_2(\phi(x,t),s)$ is an embedding $M \times I \to \R^\infty.$ Since the claim for $s=0,1$ is already clear, we focus on the case $s \in (0,1)$. As noted above, the composition of $\phi_0$ with projection onto the even dimensions results in an embedding of the complement of the fixed boundary component(s). Since $\phi(x,t)$ by assumption has only zero entries in the even dimensions, away from the fixed boundary component(s) of $M \times I$, the projection of $H_2(\phi(x,t),s)$ onto even dimensions must agree with $s\pi_{even}\circ\phi_0(x,t)$, and therefore is an embedding for any $s \in (0,1)$. Next, observe that $H_2(\phi(x,t),s)$ is $C^\infty$-arbitrarily close to the embedding $\phi$ in a suitably small neighbourhood of the fixed boundary components. Since embeddings are stable under small perturbations, $H_2(\phi(x,t),s)$ must also be an embedding when restricted to such a neighbourhood. To complete the proof of the claim, it now remains to show that $H_2(\phi(x,t),s)$ is globally injective. Since $H_2(\phi(x,t),s)$ is injective both on the fixed boundary component(s) and on their complement, the only thing we need to check is that the images of both these regions are disjoint. However this is immediate by examination of the image coordinate in dimension $2d+2$. This is zero on the fixed boundary components, but non-zero on their complement. 

In the case where the boundary conditions apply to $M \times \{0\}$
only, observe that the infinitessimal product-like structure at $M
\times \{1\}$ is preserved by $H_2$.

Overall therefore, the concatenation of the homotopies $H_1$ and $H_2$
continuously deform the space $\text{Emb}_\bullet(M \times
I,\R^{\infty})$ when $\partial M=\emptyset$ onto a one-point subset,
showing contractibility.

It now remains to consider the situation where $\partial M \neq
\emptyset.$ As mentioned previously, we will split this into the two
possible cases, defining $\phi_0$ in each case, and pointing out how
the rest of the contractibility argument differs from that given
above.

We will again need the function $\delta$, which of course measures
distance (within $M\times I$) from $M \times \{0\}$ or $(M
\times \{0\})\cup (M \times \{1\}),$ but since $\partial M$ is now
non-empty, we will need an analogous function measuring distance from
$\partial M \times I$ with respect to the background metric
$g+dt^2$. Let $\bar{\delta}: M \times I \to [0,\infty)$ be this
  function, and set $f_2(x,t):=\psi(\bar{\delta}(x,t)/\epsilon).$
  Notice that this function is independent of $t$, vanishes on
  $\partial M \times I$, and assuming $\epsilon$ is suitably small, is
  smooth on $M \times I$.

In the case $\partial M \neq \emptyset$ we define $\phi_0$ as
follows:
\begin{equation*}
  \phi_0(x,t):=(e_1,f_1f_2e_1,...,e_d,f_1f_2e_d,t,\psi(t)f_1f_2,0,f_1f_2,0,0,0...).
\end{equation*}
Since $\pi_{odd}\circ \phi_0$ agrees with $\bar{e}$, we see
immediately that this function is an embedding. Moreover, since all of
the even dimensional entries vanish to infinite order on the boundary,
we see that all the boundary conditions are satisfied.

We claim that projection onto even dimensions also yields an embedding
when restricted to the complement of the boundary components. To this
end, notice that the $t$-derivatives of $\pi_{even}\circ \phi_0$ are
everywhere non-zero, by the same argument as in the case $\partial
M=\emptyset.$ The pair $(\psi(t)f_1f_2,f_1f_2)$ of entries in
positions $2d+2$ and $2d+4$ uniquely determine the $t$-coordinate
since $\psi(t)$ is injective, and from these facts we conclude that
$\pi_{even}\circ \phi_0$ is an embedding if it is an embedding when
restricted to each slice $M \times \{t\}$ for $t \in (0,1).$ On each
of these slices $f_1$ is constant, so we can effectively ignore it,
however $f_2$ is not constant. At distances of at least $\epsilon$
from the boundary, the restriction of $\pi_{even}\circ \phi_0$ to $M
\times \{t\}$ has full rank, since $f_2 \equiv 1$ at these distances,
and hence the map agrees with $e$ up to a constant factor. At
distances less than $\epsilon$ we have that $f_2$ is variable, however
observe that $f_2$ {\it is} constant on each equidistant hypersurface
to the boundary $\partial M \times \{t\} \subset M \times
\{t\}$. Since the restriction of $\pi_{even}\circ \phi_0$ to any such
hypersurface agrees with $e$ up to a constant factor, we conclude that
this is an embedding. In the normal direction to the boundary at
distances less than $\epsilon$ (within $M \times \{t\}$) we see that
the entries in both dimensions $2d+2$ and $2d+4$ have non-zero
derivative, and hence the restriction of $\pi_{even}\circ \phi_0$ to
$M \times \{t\}$ is a local diffeomorphism. It remains to argue that
this restricted map is injective: at distances at least $\epsilon$
this follows from the injectivity of $e$, at distances less than
$\epsilon$ from the injectivity of the entries in dimensions $2d+2$
and $2d+4$ at these distances, and overall from the fact that these
latter entries take a different value in the two regions.

The rest of the contractibility argument now follows through as in the
$\partial M =\emptyset$ case.

Finally, in the case where the fixed boundary conditions do not apply
to $M \times \{1\},$ we still have to check the existence of an
infinitessimal product-like structure at this boundary component. For
$t\approx 1$ we have
$\phi_0(x,1)-\phi_0(x,t)=(0,...,0,1-t,f_2(1-\psi(t)),0,0,...).$ The
$t$-derivative of the entry $f_2(1-\psi(t))$ at $t=1$ is
$-f_2\psi'(1)=0$. Similarly for the higher $t$-derivatives. Thus up to
infinite order, $\phi_0(x,t)$ agrees with $\phi(x,1)+(1-t)v_{2d+1}$ at
$t=1$, where $v_{2d+1}$ denotes the standard basis vector in dimension
$2d+1$. This establishes the desired infinitessimal product-like
structure.
\end{proof}



\section{Lifting elements of $\pi_q\BDiff_\partial(D^n)\otimes \Q$ to $
\pi_q\calM^{\pos_*}(D^n)_{h_0} \otimes \Q$.}\label{lifting}

Consider an element $\kappa \in \pi_q \BDiff^{\e}_\partial (D^n)$
represented by a based map $\mathsf{k}: S^q \to \BDiff^{\e}_\partial
(D^n)$.  We can view any such map as an $S^q$-indexed family of disc
submanifolds of $\R^{\infty}$ with the prescribed boundary conditions,
i.e. a disc bundle over $S^q$ which is trivial when restricted to the
boundary.
\begin{remark}
We can also look at this bundle in another way: the map $\mathsf{k}$
determines a $\Diff_\partial (D^n)$-bundle over $S^q$, by pulling back
the universal $\Diff^{\e}_\partial (D^n)$-bundle over $\BDiff_\partial
(D^n)$. Via the associated bundle construction, we then re-obtain our
$D^n$-bundle over $S^q$ in the obvious way. To see that the latter
disc bundle agrees with the original, notice that by construction,
each fibre of the $\Diff_\partial (D^n)$-bundle consists of the set of
parametrizations of the corresponding fibre of the disc bundle. Thus
the $\Diff_\partial (D^n)$-bundle is precisely the associated
principal bundle to the disc bundle.
\end{remark}
On such a disc bundle we obtain a fibrewise Riemannian metric by
  restricting the metric on $\R^\infty$ to each embedded fibre
  disc. In other words we obtain a map $\bar{\mathsf{k}} : S^q \to
  \BDiff^{\m}_\partial (D^n)$, which is simply the composition
  $\xi\circ \mathsf{k}$. On the other hand, if we {\it impose} a
  smoothly varying fibrewise metric on this bundle, then the resulting
  collection of metrics determines a map $\tilde{\mathsf{k}}: S^q \to
  \BDiff^{\m}_\partial (D^n)$.  But notice that the two families of
  fibrewise metrics, and hence the two maps $\tilde{\mathsf{k}}$ and
  $\bar{\mathsf{k}}$, are homotopic via a linear homotopy. Thus both
maps represent the same element in $\pi_q\BDiff^{\m}_\partial (D^n).$
We conclude:
\begin{observation}\label{e_vs_m}
To each element $\kappa \in \pi_q\BDiff_\partial^{\e}(D^n),$ the
element $\xi_\ast(\kappa) \in \pi_q\BDiff_\partial^{\m}(D^n)$
represents all possible fibrewise metrics on a bundle of embedded
discs (as above) representing $\kappa$.
Moreover, the same
conclusion holds if $\kappa \in
\pi_q\BDiff_\partial^{\e}(D^n)\otimes \Q$ and $\xi_*$ denotes the
induced isomorphism of rational homotopy groups.
\end{observation}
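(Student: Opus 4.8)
The plan is to recognise that the integral assertion is essentially a restatement of the discussion immediately preceding the Observation, while the rational assertion then follows formally once we invoke Lemma~\ref{embedding_to_metric}. First I would fix a based map $\mathsf{k}: S^q \to \BDiff^{\e}_\partial(D^n)$ representing $\kappa$ and identify it with a bundle $E \to S^q$ of discs embedded in $\R^\infty$ satisfying the prescribed boundary conditions. Restricting the Euclidean inner product fibrewise produces a canonical fibrewise metric $\bar g$ whose classifying map is exactly $\xi \circ \mathsf{k}$, so this represents $\xi_\ast(\kappa) \in \pi_q\BDiff^{\m}_\partial(D^n)$.

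Next I would let $\tilde g$ be an arbitrary smoothly varying fibrewise metric on $E$ satisfying the boundary conditions, with classifying map $\tilde{\mathsf{k}}: S^q \to \BDiff^{\m}_\partial(D^n)$, and write down the straight-line homotopy $s \mapsto (1-s)\bar g + s\tilde g$. The only point to verify is that each $(1-s)\bar g + s\tilde g$ again lies in the relevant space of fibrewise metrics: positive-definiteness is a convex condition, and the boundary constraint --- agreement up to infinite order with a fixed metric --- is affine in the metric, hence also preserved under convex combinations. This one-parameter family of fibrewise metrics descends to a homotopy $\xi \circ \mathsf{k} \simeq \tilde{\mathsf{k}}$ of classifying maps, so $[\tilde{\mathsf{k}}] = \xi_\ast(\kappa)$, which is the integral assertion.

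For the rational statement I would invoke Lemma~\ref{embedding_to_metric}: since $\xi$ is a homotopy equivalence, $\xi_\ast$ is an isomorphism on $\pi_q$, and hence the induced map on $\pi_q(-)\otimes\Q$ is an isomorphism. Given $\kappa \in \pi_q\BDiff^{\e}_\partial(D^n)\otimes\Q$, I would clear denominators, choosing $N \in \N$ and an integral class $\kappa_0$ with $N\kappa = \kappa_0 \otimes 1$; a bundle of embedded discs representing $\kappa$ rationally is by definition one representing such a nonzero multiple $\kappa_0$, so applying the integral statement to $\kappa_0$ and then dividing by $N$ gives the claim.

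I do not anticipate a genuine obstacle. The one step needing a moment's care is the claim that the convex combination of two fibrewise metrics still satisfies the infinite-order boundary agreement, but this is immediate from the linearity of that condition in the metric; the real technical content --- contractibility of the embedding spaces and the homotopy equivalence $\xi$ --- has already been secured in Lemmas~\ref{contractible} and~\ref{embedding_to_metric}.
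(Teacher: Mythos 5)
Your proposal is correct and follows essentially the same line of reasoning as the paper: the paragraph preceding the Observation in the paper already constructs exactly the linear homotopy between the induced fibrewise metric and an arbitrary fibrewise metric, and the rational statement is then taken as an immediate formal consequence of Lemma~\ref{embedding_to_metric}. Your added remarks --- that positive-definiteness is a convex condition and that the infinite-order boundary agreement is affine in the metric, plus the denominator-clearing step for the rational case --- simply spell out details that the paper leaves implicit.
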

In the Riemannian metric model for the classifying space
$\BDiff_\partial (D^n)$, we have a quotient map
$\calR_{\del}(D^n)_{h_0} \to \BDiff^{\m}_\partial (D^n)$, where $h_0$
denotes the unit radius round hemisphere metric on $D^n$ and
$\calR_{\del}(D^n)_{h_0}$ is the space of all Riemannian metrics on
$D^n$ which agree with $h_0$ up to infinite order at the boundary.
Since pulling back the metric preserves the curvature condition, we
therefore obtain a well-defined restriction map
$\calR_{\del}^{\pos_*}(D^n)_{h_0} \to \BDiff^{\m}_\partial (D^n).$ In
a similar fashion we also have an inclusion map of moduli spaces
$\calM_\partial^{\pos_*}(D^n)_{h_0} \to
\calM_\partial(D^n)_{h_0}=\BDiff^{\m}_\partial (D^n)$.

If there exists a fibrewise metric with positive curvature of the
appropriate type, we obtain a lift of the map $\xi\circ\mathsf{k}: S^q \to
\BDiff^{\m}_\partial (D^n)$ as follows:
\begin{center}
\begin{tikzcd}
  & & \calM_\partial^{\pos_*}(D^n)_{h_0} \arrow[d]
  \\
  S^q \arrow[rru,dashed]\arrow{r}[swap]{\mathsf{k}}&\BDiff^{\e}_\partial (D^n)
  \arrow{r}[swap]{\xi} & \BDiff^{\m}_\partial (D^n).
\end{tikzcd}
\end{center}
By Observation \ref{e_vs_m} it makes sense to identify
$\pi_q\BDiff_\partial^{\e}(D^n)$ and
$\pi_q\BDiff_\partial^{\m}(D^n)$ via $\xi_\ast$, and we will simply
write $\pi_q\BDiff_\partial(D^n)$ from now on.
In summary, we obtain:
\begin{lemma}\label{moduli_lifts}
The existence of a fibrewise ``positively curved'' metric (in whatever
sense is under consideration) on a $D^n$-bundle arising from an
element $\kappa \in \pi_q \BDiff_\partial (D^n)$, (respectively
  an element $\kappa \in \pi_q \BDiff_\partial (D^n)\otimes
  \Q$), determines an element $\bar{\kappa} \in \pi_q
\calM_\partial^{\pos_*}(D^n)_{h_0}$, (respectively an
    element $\bar{\kappa} \in \pi_q
  \calM_\partial^{\pos_*}(D^n)_{h_0}\otimes \Q)$, which coincides
with $\xi_\ast(\kappa)$ under the map induced by the inclusion
\begin{equation*}
  \calM_\partial^{\pos_*}(D^n)_{h_0} \rh \calM_\partial(D^n)_{h_0}=\BDiff_\partial (D^n).
\end{equation*}  
\end{lemma}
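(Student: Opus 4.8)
The plan is to take the lift produced diagrammatically above and simply track what it says on homotopy groups. First I would observe that the dashed arrow in the displayed diagram is, by hypothesis, an actual lift of $\xi\circ\mathsf{k}$ through the fibration $\calM_\partial^{\pos_*}(D^n)_{h_0}\to\calM_\partial(D^n)_{h_0}$: the existence of a smoothly varying fibrewise metric with the appropriate positive curvature on the bundle associated to $\kappa$ gives, by the construction preceding Observation \ref{e_vs_m}, a map $S^q\to\calM_\partial^{\pos_*}(D^n)_{h_0}$, and pulling back along $\calM_\partial^{\pos_*}(D^n)_{h_0}\rh\calM_\partial(D^n)_{h_0}=\BDiff^{\m}_\partial(D^n)$ the associated metric is by construction (up to the linear homotopy in Observation \ref{e_vs_m}) the one coming from the Euclidean embedding, i.e. $\xi\circ\mathsf{k}$. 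Define $\bar\kappa\in\pi_q\calM_\partial^{\pos_*}(D^n)_{h_0}$ to be the class of this lift; the discussion above shows it is well defined (independent of the choice of fibrewise positively curved metric up to homotopy, since any two such are joined by a path in $\calR^{\pos_*}_\partial(D^n)_{h_0}$, or at least represent the same homotopy class after composing with the quotient).

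Next I would verify the compatibility statement: that the image of $\bar\kappa$ under the map induced by the inclusion $\calM_\partial^{\pos_*}(D^n)_{h_0}\rh\calM_\partial(D^n)_{h_0}=\BDiff_\partial(D^n)$ is $\xi_*(\kappa)$. This is immediate from the previous paragraph together with commutativity of the diagram: the composite $S^q\to\calM_\partial^{\pos_*}(D^n)_{h_0}\to\BDiff^{\m}_\partial(D^n)$ equals $\xi\circ\mathsf{k}$ up to homotopy, and $[\xi\circ\mathsf{k}]=\xi_*[\mathsf{k}]=\xi_*(\kappa)$ by definition of the induced map on homotopy groups. Here one uses the identification of $\pi_q\BDiff_\partial^{\e}(D^n)$ with $\pi_q\BDiff_\partial^{\m}(D^n)$ via $\xi_*$ (Lemma \ref{embedding_to_metric}, Observation \ref{e_vs_m}), which is exactly why we may write $\pi_q\BDiff_\partial(D^n)$ without decoration.

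For the rational statement one repeats the argument with $\kappa\in\pi_q\BDiff_\partial(D^n)\otimes\Q$: choose an integral multiple of a representing sphere so that the fibrewise positively curved metric exists on that multiple (or invoke the rational statement of Observation \ref{e_vs_m} directly, since $\xi_*$ is an isomorphism on rational homotopy by Lemma \ref{embedding_to_metric}), produce the lift as before, and divide back. The resulting $\bar\kappa\in\pi_q\calM_\partial^{\pos_*}(D^n)_{h_0}\otimes\Q$ maps to $\xi_*(\kappa)$ under the inclusion-induced map by linearity.

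Honestly, there is no serious obstacle here: the lemma is a packaging of the diagram and of Observation \ref{e_vs_m}, and the only point requiring a little care is making precise in what sense the fibrewise metric obtained from the Euclidean embedding agrees with an imposed fibrewise positively curved metric — but this is exactly the linear-homotopy argument already run before Observation \ref{e_vs_m}, namely that the space of fibrewise metrics on a fixed bundle is convex, so any two sections of $\EDiff^{\m}_\partial(D^n)\to\BDiff^{\m}_\partial(D^n)$ over the same base map are homotopic. The main thing to get right is simply bookkeeping of which model ($\e$ or $\m$) each map lands in and invoking Lemma \ref{embedding_to_metric} to pass between them.
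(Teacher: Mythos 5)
Your proposal is essentially correct and follows the same route as the paper: the paper does not give a separate proof of this lemma, but rather states it as a summary ("In summary, we obtain:") of the discussion surrounding Observation \ref{e_vs_m} and the displayed lifting diagram, and your write-up is a faithful unpacking of that discussion. In particular, the key point — that the fibrewise metric induced by the Euclidean embedding and the imposed positively curved fibrewise metric give homotopic maps $S^q \to \BDiff^{\m}_\partial(D^n)$ via the linear homotopy, so the lift's image under the inclusion equals $\xi_*(\kappa)$ — is exactly the content of Observation \ref{e_vs_m}, and you invoke it correctly. Your treatment of the rational case and the model-bookkeeping via Lemma \ref{embedding_to_metric} also matches what the paper intends.

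One parenthetical in your proposal overreaches, though it is harmless here: you assert that $\bar\kappa$ is "independent of the choice of fibrewise positively curved metric up to homotopy, since any two such are joined by a path in $\calR^{\pos_*}_\partial(D^n)_{h_0}$, or at least represent the same homotopy class after composing with the quotient." This is not justified and in general false. The linear homotopy that works in $\calR_\partial(D^n)_{h_0}$ (the whole space of metrics, which is convex) does \emph{not} preserve the positivity condition, and $\calR^{\pos_*}_\partial(D^n)_{h_0}$ is not convex — indeed, the nontrivial topology of such spaces of positively curved metrics is precisely what this paper and its predecessors are about. The lemma does not claim that $\bar\kappa$ is canonical; it only asserts that a choice of fibrewise positively curved metric \emph{produces some} $\bar\kappa$ mapping to $\xi_*(\kappa)$, and different choices may well give different lifts. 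So the well-definedness claim should be deleted; with it removed, your argument is correct.
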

%
%
%

\section{Hatcher bundles: topology and geometry}\label{Hatcher}
Hatcher (sphere) bundles are certain smooth $S^n$-bundles with base
$S^i$, which correspond in some sense to elements in the kernel of the
classical $J$-homomorphism. A key feature of Hatcher bundles is that
they are in general non-trivial in a smooth sense, but topologically
trivial. For an introduction to these objects, see \cite[Section
  3.1]{BWW} and the references therein, especially \cite{G} and
\cite{I}.

Hatcher bundles have played an important role in casting light on the
topology of the observer moduli space of Riemannian metrics with
positive scalar curvature (see for example \cite{BHSW}), and positive
Ricci curvature (see \cite{BWW}). Although we will not consider the
observer moduli space in the current paper, it will be important for
us to recall the related notion of an {\it observer diffeomorphism
  group}.  For a manifold $N$ and a choice of basepoint $x_0 \in N$,
let $\text{Diff}_{x_0}(N)$ denote the space of diffeomorphims $N \to
N$ which fix the basepoint and for which the derivative at $x_0$ is
the identity mapping. This is the observer diffeomorphism group of
$N$. The theorem below appears as \cite[Theorem 11]{BWW}, and
summarises results from \cite{G} and \cite{I}:
\begin{theorem}\label{Hatcher_sphere}
For any $q \in \N$, there exists $N(q) \in \N$, with $N \to \infty$ as
$q \to \infty$, such that for all odd $n>N$, every element of the
group $\pi_{4q}\text{BDiff}_{x_0}(S^n) \otimes \Q$ is represented by a
Hatcher sphere bundle.
\end{theorem}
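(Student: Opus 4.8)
The plan is to reduce the statement about the observer group $\Diff_{x_0}(S^n)$ to one about $\Diff_\partial(D^n)$, use the stable parametrised pseudoisotopy calculus to see that the relevant rational homotopy group is one-dimensional and detected by a higher torsion invariant, and then quote the torsion computations of Goette \cite{G} and Igusa \cite{I} to conclude that a Hatcher bundle represents a generator; throughout, I would follow \cite[Theorem 11]{BWW}, which assembles exactly these inputs. For the reduction, I would remove a fixed small disc $D$ about $x_0$: evaluating the $1$-jet at $x_0$ realises $\Diff_{x_0}(S^n)$ as the homotopy fibre of $\Diff(S^n)\to\mathrm{Fr}^{\mathrm{GL}}(TS^n)$, while restriction to $D$ realises $\Diff(S^n,D)\cong\Diff_\partial(D^n)$ as the homotopy fibre of $\Diff(S^n)\to\Emb(D,S^n)\simeq\mathrm{Fr}^{\mathrm{GL}}(TS^n)$; since these two maps agree up to homotopy, $\Diff_{x_0}(S^n)\simeq\Diff_\partial(D^n)$, and hence $\BDiff_{x_0}(S^n)\simeq\BDiff_\partial(D^n)$. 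It therefore suffices to show that every class in $\pi_{4q}\BDiff_\partial(D^n)\otimes\Q$ is a rational multiple of the class of a Hatcher $D^n$-bundle trivialised over the boundary; doubling such a bundle along the trivialised boundary produces the Hatcher $S^n$-bundle asserted in the theorem.

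Next I would pin down this rational homotopy group in a stable range. Fix $N(q)$ so that $4q$ lies within the pseudoisotopy stable range $\phi(n)$ of \eqref{phi-n} for all $n>N(q)$; the bound $\phi(n)\ge\min\{\tfrac{n-7}{2},\tfrac{n-4}{3}\}$ lets one take $N(q)$ linear in $q$, so $N(q)\to\infty$ as required. In this range, Igusa's stability theorem \cite{I} and Waldhausen's identification of the stable pseudoisotopy space with the smooth Whitehead space, together with the Farrell--Hsiang computation of $\pi_*\Diff_\partial(D^n)\otimes\Q$ and Borel's computation of $K_*(\Z)\otimes\Q$, give for $n$ odd that $\pi_{4q}\BDiff_\partial(D^n)\otimes\Q\cong\Q$, and --- this is the key point for the next step --- that this line is detected by a single real invariant of smooth $D^n$-bundles over $S^{4q}$: the degree-$4q$ higher smooth torsion (higher Franz--Reidemeister torsion, equivalently the Bismut--Lott analytic torsion class), normalised via the Borel regulator.

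The final step is to check that Hatcher bundles hit a generator. Hatcher's construction supplies explicit smooth $D^n$-bundles over $S^{4q}$ (for $n$ in the above range) which are trivial over the boundary and topologically trivial; by the theorems of Goette \cite{G} and Igusa \cite{I}, the degree-$4q$ higher torsion of such a bundle is \emph{nonzero}, hence a nonzero rational multiple of the generator identified above. Since a one-dimensional $\Q$-vector space is detected by any nonzero functional, the class of a Hatcher bundle generates $\pi_{4q}\BDiff_\partial(D^n)\otimes\Q$, so every element of this group --- equivalently, via the first step, of $\pi_{4q}\BDiff_{x_0}(S^n)\otimes\Q$ --- is a rational multiple of the Hatcher class; integer multiples are realised by iterated fibrewise connected sums of Hatcher bundles along the base, and after $\otimes\Q$ rational multiples then suffice.

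The soft homotopy theory of the first two steps is essentially formal once the stable pseudoisotopy package is taken as input; the step I expect to be hardest --- and which carries all the content --- is the nonvanishing in the third step, namely actually evaluating the higher smooth (or analytic) torsion of the Hatcher family and checking both that it is nonzero and that it is the invariant detecting the $A$-theoretic line from the second step. This amounts to producing an explicit torsion cocycle for the Hatcher bundle and relating its value to the Borel regulator / special values of $\zeta$, which is precisely the content of \cite{G} and \cite{I}.
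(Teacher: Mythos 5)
The paper does not give its own proof of this statement: it is quoted verbatim from \cite[Theorem 11]{BWW} and attributed there to \cite{G} and \cite{I}. Your proposal reconstructs exactly the chain of reasoning those references use --- the reduction $\BDiff_{x_0}(S^n)\simeq\BDiff_\partial(D^n)$ via the two fibration sequences, the Farrell--Hsiang/Igusa identification $\pi_{4q}\BDiff_\partial(D^n)\otimes\Q\cong\Q$ in the pseudoisotopy stable range, and the nonvanishing of the higher Franz--Reidemeister/Bismut--Lott torsion of the Hatcher family --- so it is correct and matches the cited route; there is no independent argument in the paper against which to contrast it.
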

\begin{Convention}
In the sequel we will refer to a pair $q,n$ as {\it admissible} if either $n$ is odd and $n>N(q)$, or $n$ is even and $n+1>N(q)$. 
\end{Convention}
Although in \cite{BWW} we consider Hatcher sphere bundles, in the
present paper we need to consider Hatcher {\it disc} bundles. The
Hatcher sphere bundles constructed in \cite{BWW} are actually doubles
of disc bundles, and these latter bundles are the Hatcher disc
bundles. Like the corresponding sphere bundles, Hatcher disc bundles
are topologically trivial, but smoothly non-trivial in
general. However the boundary sphere bundle {\it is} trivial. The
following lemma is established as part of the main
construction in \cite{BWW}.
\begin{lemma}\label{Ric_Hatcher_disc}
Every Hatcher disc bundle admits a fibrewise metric of positive Ricci
curvature, such that all fibres are isometric and have strictly convex
boundaries.
\end{lemma}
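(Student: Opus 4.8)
The plan is to defer to the construction in \cite{BWW}, so this lemma is essentially a matter of extracting the relevant geometric output of that construction rather than building anything new. First I would recall the construction of the Hatcher sphere bundle $\pi: E \to S^i$ from \cite[Section 3.1]{BWW}: it is assembled by gluing together two fibre bundles along a common fibrewise sphere, and by construction this sphere bundle is the fibrewise double of a disc bundle $\pi_D: E_D \to S^i$ with fibre $D^n$. The point to emphasize is that the fibrewise positive Ricci curvature metric produced in \cite{BWW} on the Hatcher sphere bundle is itself built by a doubling procedure: one constructs a fibrewise positive Ricci metric on the disc bundle $E_D$ whose restriction to the boundary $\partial E_D \to S^i$ is a fixed round metric meeting the boundary orthogonally (or, after a standard bending/torpedo modification, with strictly convex boundary), and then one glues two copies of this along the boundary to obtain the metric on $E$. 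So the fibrewise positive Ricci metric on the Hatcher disc bundle is literally an intermediate object in the proof of the sphere-bundle statement.

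The key steps, in order, would be: (i) identify the Hatcher disc bundle $E_D \to S^i$ as the constituent disc bundle of the Hatcher sphere bundle, noting (as recalled in the surrounding text) that its boundary sphere bundle is trivial; (ii) quote the fibrewise metric constructed in \cite{BWW}, observing that the construction there proceeds fibrewise by a uniform family of positive Ricci curvature metrics on $D^n$, so that all fibres are isometric — this isometry is automatic because the structure group of the Hatcher disc bundle acts by isometries of the fixed fibre metric, which is how the fibrewise metric is defined via an associated bundle; (iii) recall that the fibre metric used is of torpedo/warped-product type near the boundary, which after the standard collar modification (already present in \cite{BWW} to enable the doubling) has strictly convex boundary while retaining positive Ricci curvature — this is a classical local deformation and preserves the isometry of fibres since it is performed uniformly. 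Assembling (i)–(iii) gives exactly the statement.

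The main obstacle, such as it is, is bookkeeping rather than mathematics: one must make sure that the metric quoted from \cite{BWW} genuinely has \emph{strictly convex} boundary (as opposed to merely orthogonal, product-like boundary), and that the convexity modification can be carried out fibrewise and uniformly so that the ``all fibres isometric'' clause survives. I would address this by pointing out that the torpedo-type fibre metric in \cite{BWW} already has a concave collar before doubling, or alternatively by invoking the standard fact that a round $\epsilon$-polar cap metric on $D^n$ has positive Ricci curvature and strictly convex boundary, and that such a cap can be inserted fibrewise; since this insertion uses only the fixed fibre and the existing structure group, isometry of fibres and triviality of the boundary bundle are preserved. Beyond that, there is nothing to prove here that is not already contained in \cite{BWW}; the lemma is stated separately only because the current paper needs the disc-bundle version explicitly.
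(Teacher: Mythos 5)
Your proposal matches the paper's treatment, which gives no independent proof and simply cites this lemma as being established as part of the main construction in \cite{BWW}; your unpacking of that construction (the sphere bundle as a fibrewise double, the convex-boundary fibre metric on the disc bundle as the intermediate object, uniformity of the construction giving isometric fibres) is an accurate account of why \cite{BWW} delivers the result. One small logical correction: isometry of fibres is not \emph{automatic} because the structure group acts by isometries of a fixed fibre metric -- the structure group of a Hatcher disc bundle is $\Diff_\partial(D^n)$, which a priori need not preserve any fixed metric; rather, the \cite{BWW} construction deliberately produces a fibrewise metric whose associated structure-group reduction lands in the isometry group of the chosen fibre metric, so isometry of fibres is a by-product of the construction rather than a prior structural fact about the bundle.
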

Positive Ricci curvature of course implies $Sc_k>0$ for all $1 \le k
\le n.$ The convexity of each fibre boundary is not quite enough for
our later constructions: we will actually require a round boundary
with principal curvatures that are sufficiently large in some sense,
though of course we only require this in the context of $Sc_k>0$ for
$k \ge 2.$ The next result fixes this issue by adding a suitable
trivial ``collar'' bundle to the Hatcher disc bundle.
\begin{lemma}\label{collar}
Given constants $R>0$ and $\alpha \in [0,1/R),$
  every Hatcher disc bundle 
  admits a fibrewise metric with $Sc_2>0$,
  such that all fibres are isometric with round boundary of radius
  $R$, and the principal curvatures at the boundary (with respect to
  the inner normal) are all equal to 
  $\alpha.$ Moreover, we can arrange that an annular neighbourhood of
  each fibre boundary is isometric to an annulus in some round sphere.
\end{lemma}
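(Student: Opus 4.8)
The plan is to start with the fibrewise positive Ricci curvature metric on a given Hatcher disc bundle supplied by Lemma \ref{Ric_Hatcher_disc}, and to glue on, fibrewise, a fixed ``collar'' piece carrying the desired round boundary geometry. Since all fibres of the Hatcher disc bundle are isometric and have strictly convex boundary (isometric to some fixed round sphere $S^{n-1}$ of radius $\rho$, say — here we may first replace the fibrewise metric by one whose boundary is a genuine round sphere and is met orthogonally, since the construction in \cite{BWW} produces exactly this), the gluing region is a single Riemannian manifold independent of the base point, so the result will automatically be a fibre bundle with isometric fibres, and it will be topologically trivial (still the double-free half of a Hatcher sphere bundle) for the same reasons as before. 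The bundle structure is therefore never in question; the entire content is the local Riemannian geometry of the collar, so I will construct that first as a statement about a single manifold and then apply it fibrewise.

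Concretely, I would look for a rotationally symmetric metric $dr^2 + \varphi(r)^2\, ds_{n-1}^2$ on a cylinder $S^{n-1}\times[0,L]$ which at $r=0$ matches the boundary sphere of the Hatcher fibre (value $\varphi(0)=\rho$, and $\varphi'(0)$ equal to whatever the mean curvature of that boundary dictates) together with all higher-order matching data, and which at $r=L$ satisfies $\varphi(L)=R$, $\varphi'(L)=-\alpha R$ (so the outward principal curvatures are $-\varphi'/\varphi = \alpha$ with respect to the inner normal), and $\varphi^{(k)}(L)=0$ for $k\ge 2$ — the last condition forcing an annular neighbourhood of the new boundary to be isometric to a round annulus, namely the metric $dr^2+R^2 ds_{n-1}^2$ of a round sphere of radius... (one checks that $\varphi\equiv$ const gives a flat cylinder, so in fact one wants $\varphi$ to agree near $r=L$ with the profile $R\cos$ or $R\cosh$ shifted appropriately so that the annulus sits isometrically inside a round sphere; I would use the round-sphere profile, which also makes the boundary principal curvatures automatically equal). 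The curvature constraint $Sc_2>0$ on such a warped product reduces to explicit inequalities on $\varphi$, $\varphi'$, $\varphi''$ — the sectional curvatures are $-\varphi''/\varphi$ (mixed planes) and $(1-\varphi'^2)/\varphi^2$ (spherical planes), and one needs the sum of the smallest two eigenvalues of Ricci to be positive — and because $n$ is large these are easy to satisfy as long as $\varphi$ is, say, monotone and not too concave; the hypothesis $\alpha<1/R$ is exactly what guarantees $1-\varphi'^2(L)=1-\alpha^2R^2>0$ so that the spherical curvature term is positive at the outer end, and hence that the round annulus we cap off with genuinely has $Sc_2>0$ (indeed positive sectional curvature). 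Choosing $\varphi$ to interpolate: strictly increasing on an initial sub-interval to blow $\rho$ up past $R$ is not needed — rather I interpolate monotonically between the two prescribed 1-jets using a smooth, essentially convex-then-concave profile, keeping $|\varphi'|$ small, which kills $-\varphi''/\varphi$ contributions' sign worries.

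The main obstacle, and the step I would spend the most care on, is the $C^\infty$-matching at the \emph{inner} end $r=0$ with the (generally non-rotationally-symmetric, though by \cite{BWW} actually round) boundary of the Hatcher fibre, simultaneously with keeping $Sc_2>0$ across the seam: one must check that the boundary metric, second fundamental form, and all higher normal jets of the Hatcher fibre at its boundary can be realized as the $r=0$ data of a rotationally symmetric collar, which is automatic if \cite{BWW} indeed produces an annular neighbourhood of the fibre boundary isometric to a round annulus (so that the jet data is just that of the model $dr^2+\varphi_{\mathrm{round}}(r)^2 ds_{n-1}^2$), and otherwise requires first a short preliminary deformation of the fibrewise metric near the boundary — within $Sc_2>0$, using that $n$ is large and positive Ricci is an open condition with slack — to make the collar round. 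Granting that normalization, the remaining verification is the routine ODE/inequality bookkeeping for warped products sketched above, together with the observation that gluing two metrics with matching $\infty$-jets produces a smooth metric whose curvature is controlled by the two pieces, so $Sc_2>0$ is preserved. Finally I would remark that $\alpha=0$ is allowed (totally geodesic round boundary), the construction going through verbatim since $0<1/R$.
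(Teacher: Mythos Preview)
Your approach is plausible but takes a genuinely different route from the paper's. You glue a rotationally symmetric collar $dr^2+\varphi(r)^2\,ds^2_{n-1}$ directly onto the boundary of each Hatcher disc fibre and interpolate $\varphi$ toward the prescribed boundary data. The paper instead never touches the Hatcher disc boundary at all: it first doubles the disc bundle to a Hatcher \emph{sphere} bundle (smoothing the $C^0$ seam within positive Ricci via \cite[Theorem 10]{BWW}), deforms each sphere fibre to be round near a section point using \cite[Proposition 2.3]{WW}, separately equips a \emph{trivial} disc bundle with round polar-cap metrics in a sphere of radius $\rho$, with $\rho$ chosen so that the cap boundary has intrinsic radius $R$ and principal curvatures $\alpha$ (this is precisely where $\alpha<1/R$ enters, as you correctly identified), and finally forms a fibrewise connected sum of the two bundles using Wolfson's $Sc_2>0$ tube construction \cite[Theorem 2.1]{Wo}. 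The paper's argument thus reduces entirely to three cited black boxes and requires nothing about the Hatcher fibre beyond positive Ricci curvature and strict boundary convexity. Your route is more direct and avoids the sphere-bundle detour, but it genuinely hinges on the point you yourself flag as the main obstacle---that the Hatcher fibre metric is, or can be deformed within $Sc_2>0$ to be, a warped product in a neighbourhood of its boundary---and Lemma~\ref{Ric_Hatcher_disc} does not assert this; the paper's connected-sum construction sidesteps that issue completely.
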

\begin{proof}
The strategy will be to glue a collar to each fibre, so that the
resulting ``extended'' bundle has the properties listed in the statement
of the Lemma.

We begin by forming the fibrewise Riemannian double of the disc
bundle. Since the metric is convex at the boundary, the $C^0$-metric
which results from this doubling can be smoothed in an arbitrarily
small neighbourhood of the join to give a global fibrewise Ricci
positive metric on the resulting Hatcher sphere
bundle, as is established in \cite[Theorem 10]{BWW}. This sphere
bundle admits a section: as glued the boundary of the two disc bundles
is a trivial $S^{n-1}$-bundle, we can construct a section by choosing
the `same' point in this equator for each fibre. It is known (see for
example \cite[Proposition 2.3]{WW}), that a Ricci positive metric can
be deformed in a neighbourhood of any point to be round in some
smaller neighbourhood, while globally preserving the Ricci
positivity. Since all the fibres of the Hatcher sphere bundle are
isometric, we can make such a deformation centered around each point
of the section in an identical manner for each fibre.

Next consider a (smoothly) trivial disc bundle over $S^{4q}$, and for
$\rho\ge R$ equip this with a fibrewise metric such that each fibre is
a polar cap of radius $\rho\sin^{-1}(R/\rho)$ in the sphere
$(S^n,\rho^2ds^2_n).$ Thus each fibre has positive sectional curvature
with round boundary of radius $R$, and it is easily checked that all
principal curvatures at the boundary are equal to
$\sqrt{\rho^2-R^2}/\rho R.$ This function is increasing with $\rho$,
and therefore takes all values in the interval $[0,1/R)$ as $\rho$
  ranges over $[R,\infty)$. Thus given $\alpha$ as in the statement of
    the Lemma, we can find $\rho$ such that the principal curvatures
    at the boundary are all equal to $\alpha$.

The final step is to form a fibrewise connected sum between the
Hatcher sphere bundle and the trivial disc bundle above, where $\rho$
has been chosen so as to realize the desired principal curvatures at
the boundary. To do this, we remove identical small discs from the
latter bundle around the centre of each disc, then smoothly attach
identical tubes with $Sc_2>0$ to the resulting boundaries, as in the
proof of \cite[Theorem 2.1]{Wo}. A neighbourhood of the free end of
each attached tube is metrically a cylinder of some small fixed
radius. We then perform a similar construction around the section
points of the former bundle, again in an identical manner for each
fibre. It is straightforward to arrange that the radii of the two free
ends agree, and hence these ends can be joined to create a smooth
`extended' bundle with fibrewise $Sc_2>0$ and the boundary conditions
specified in the Lemma. This completes the construction
of the metric with $Sc_2>0$.
\end{proof}
The key topological point of importance for us concerning Hatcher
sphere bundles is the following:
\begin{proposition}\label{Hatcher_disc}
Given $q \ge 1$ and $n$ odd, consider the group
$\pi_{4q}\BDiff_\partial(D^n) \otimes \Q.$ Then provided $q,n$ are
admissible, any element of this group is represented by a Hatcher disc
bundle.
\end{proposition}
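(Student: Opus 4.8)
The plan is to derive Proposition~\ref{Hatcher_disc} from Theorem~\ref{Hatcher_sphere} by relating the observer diffeomorphism group of $S^n$ to the boundary diffeomorphism group of $D^n$, and then tracking the geometric construction of Hatcher bundles. First I would recall the standard fact that there is a weak homotopy equivalence (or at least a rationally highly-connected map) relating $\BDiff_{x_0}(S^n)$ and $\BDiff_\partial(D^n)$: given a point $x_0 \in S^n$, an observer diffeomorphism fixes $x_0$ and acts as the identity on $T_{x_0}S^n$, so by an isotopy supported near $x_0$ one may arrange it to be the identity on a small disc neighbourhood of $x_0$; complementarily, it restricts to a diffeomorphism of the closed complementary disc $D^n = S^n \setminus (\text{small open disc})$ that is the identity near the boundary. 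This gives a fibration sequence (up to homotopy) $\Diff_\partial(D^n) \to \Diff_{x_0}(S^n) \to F$, where $F$ is a contractible (or at least rationally trivial in the relevant range) space of germs/jets at $x_0$; hence $\pi_{4q}\BDiff_\partial(D^n)\otimes\Q \cong \pi_{4q}\BDiff_{x_0}(S^n)\otimes\Q$ in the admissible range.

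Next I would invoke Theorem~\ref{Hatcher_sphere}: since the pair $q,n$ is admissible and $n$ is odd, every element of $\pi_{4q}\BDiff_{x_0}(S^n)\otimes\Q$ is represented by a Hatcher sphere bundle. The point made in the paragraph following Lemma~\ref{Ric_Hatcher_disc} is that the Hatcher sphere bundles constructed in \cite{BWW} are fibrewise doubles of Hatcher disc bundles along a \emph{trivial} boundary $S^{n-1}$-bundle. So a Hatcher sphere bundle over $S^{4q}$ comes equipped with a section (the `same' equatorial point in each fibre, using triviality of the equator bundle), and the complement of a fibrewise tubular neighbourhood of the section's opposite hemisphere — equivalently, one of the two halves of the double — is precisely the associated Hatcher \emph{disc} bundle. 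Under the identification of classifying spaces above, the class in $\pi_{4q}\BDiff_\partial(D^n)\otimes\Q$ of this disc bundle maps to the given class in $\pi_{4q}\BDiff_{x_0}(S^n)\otimes\Q$; chasing through the equivalence shows it represents an arbitrary prescribed element.

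The main obstacle is making the first step — the comparison of $\BDiff_\partial(D^n)$ with $\BDiff_{x_0}(S^n)$ — fully rigorous at the level needed, i.e.\ identifying the fibre $F$ of germs at $x_0$ and checking it is rationally trivial (or at least $4q$-connected rationally) in the admissible range, and verifying that the `cutting out a disc' correspondence is compatible with the Hatcher bundle structure so that one really recovers the Hatcher disc bundle of \cite{BWW} rather than some other disc bundle with the same underlying principal bundle. One must be careful that the boundary conditions match up: the embedding/metric models used for $\BDiff_\partial(D^n)$ fix the metric to infinite order at the boundary sphere, and one needs the section-neighbourhood deformation (making each fibre round near the section, cf.\ the proof of Lemma~\ref{collar}) to be performed compatibly so that the resulting disc bundle genuinely lands in $\BDiff_\partial(D^n)$ with the standard hemisphere boundary. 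Once these compatibilities are in place, the conclusion is immediate, and in fact the same argument — combined with Lemma~\ref{collar} — is what lets one later lift these classes to the moduli space via Lemma~\ref{moduli_lifts}.
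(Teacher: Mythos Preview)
Your approach is essentially the paper's: both pass from $\pi_{4q}\BDiff_\partial(D^n)\otimes\Q$ to $\pi_{4q}\BDiff_{x_0}(S^n)\otimes\Q$ via the doubling map and then invoke Theorem~\ref{Hatcher_sphere}. The paper's argument is much terser than yours: it simply observes that the doubling homomorphism is rationally surjective (Hatcher disc bundles double to Hatcher sphere bundles, and the latter exhaust the target by Theorem~\ref{Hatcher_sphere}), and then asserts that pre-images are represented by Hatcher disc bundles. You instead aim to prove the doubling map is a rational \emph{isomorphism} via a fibration argument, which in fact supplies a step the paper leaves implicit. Two small comments. First, your fibre $F$ is not really a space of germs or jets: the clean statement is that restriction gives a fibration $\Diff_\partial(D^n)\to\Diff_{x_0}(S^n)\to\Emb_{x_0}(D^n_\epsilon,S^n)$, with base the (contractible) space of embeddings of a small disc fixing the jet at its centre. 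Second, the boundary-metric compatibility worries in your final paragraph are unnecessary here --- Proposition~\ref{Hatcher_disc} is a purely topological statement about homotopy classes of disc bundles, and the round-boundary and infinite-order conditions only enter later, in Lemma~\ref{collar} and the lifting arguments of Section~\ref{m+}.
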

\begin{proof}
As noted above, every element of $\pi_{4q}\text{BDiff}_{x_0}(S^n)
\otimes \Q$ (for appropriate $q,n$) is represented by a Hatcher sphere
bundle which is the double of a Hatcher disc bundle.

We have a ``doubling map'' $\pi_{4q}\BDiff_\partial(D^n) \to
\pi_{4q}\BDiff_{x_0}(S^n)$, obtained by doubling the disc bundles
represented by elements of $\pi_{4q}\BDiff_\partial(D^n),$ with the
basepoint $x_0$ chosen on the common boundary between the two
discs. It is easily checked that this map is a homomorphism. This then
induces a rational map $\pi_{4q}\BDiff_\partial(D^n)\otimes \Q \to
\pi_{4q}\BDiff_{x_0}(S^n)\otimes \Q$. As each element of the target
group in the rational case is represented by a Hatcher bundle, the
rational homomorphism must be surjective. In particular, each
pre-image under this map must represent a Hatcher disc bundle, hence
the result.
\end{proof}


\section{Spaces of concordances}\label{concordance}
The ultimate aim of this section is to present a proof of the Main
Theorem. This will follow by considering a diagram
\begin{center}
\begin{tikzcd}
  \pi_{4q}\calM^{\pos_*}_\partial(M \times I)_{\tilde g}\otimes \Q \arrow[d] \arrow[r] &
  \pi_{4q}\calM^{\pos_*}_\sqcup(M \times I)_g\otimes \Q \arrow[d] \\
\pi_{4q}\text{BDiff}_\partial(M \times I)\otimes \Q \arrow[r] & \pi_{4q}\text{BDiff}_\sqcup(M \times I)\otimes \Q,
\end{tikzcd}
\end{center}
where all maps are induced by inclusion.

Our strategy is this: first we show that the group in the bottom left
corner of this diagram is non-zero, and then we show that these
non-zero elements can be lifted to the group in the upper left. The
next task is to show that the map at the bottom is non-zero, from
which point the main theorem can be easily proved by a diagram-chasing
argument.

\subsection{Showing $\pi_{4q}\text{BDiff}_\partial(M \times I) \otimes \Q \neq 0.$}
Recall from Proposition \ref{Hatcher_disc} that every element of
$\pi_{4q}\text{BDiff}_\partial (D^n) \otimes \Q$ (for suitable $q,n$)
can be represented by a Hatcher disc bundle. This is a certain
$D^n$-bundle over $S^{4q}$ which is trivial when restricted to the
boundary.

For a compact manifold $N^n$, we can view an element
$\kappa \in \pi_{4q}\BDiff(N^n)$ as being
represented by a bundle $\pi: E\to S^{4q}$ where the fibres are embedded
copies of $N$ in $\R^\infty$. In other words, we
  assume the bundle $\pi: E\to S^{4q}$ is a part of the commutative
  diagram:
\begin{center}
\begin{tikzcd}
  E \arrow{d}[swap]{\pi} \arrow[hookrightarrow]{r} &
  S^{4q}\times\R^{\infty}\arrow{dl}{pr}
  \\
  S^{4q}
\end{tikzcd}
\end{center}
where $pr: S^{4q}\times\R^{\infty}\to S^{4q}$ is the
  projection on the first factor and $\pi^{-1}(x)=N_x\subset
  \R^{\infty}$ is an embedded fibre.

For such a manifold $N$ we can make the following construction. Start
by fixing an embedding $\chi: N \rh \R^\infty$, and
form the product bundle $S^{4q} \times N$, with $N$
identified with the image of $\chi$. Remove a product disc bundle to
leave $\bigl(S^{4q} \times N \bigr) \setminus \bigl(S^{4q} \times
D^n\bigr),$ where $D^n \subset N$ is a fixed embedding. In the case
where $\partial N \neq \emptyset$ we assume that $D^n$ is embedded
into the interior of $N$. Given any element $\kappa \in
\pi_{4q}\BDiff_\partial(D^n)\otimes \Q$, consider the elements
$\bar{\kappa}_i\in \pi_{4q}\BDiff_\partial (D^n)$
which map onto $\kappa$ after tensoring with
$\Q$. Any two such elements
  $\bar{\kappa}_i$, $i=1,2$, differ by a torsion element of
  $\pi_{4q}\BDiff_\partial(D^n)$ in the stable range.  Now consider
the embedded disc bundles corresponding to $\bar{\kappa}_i$, and glue
these into $\bigl(S^{4q} \times N \bigr) \setminus \bigl(S^{4q} \times
D^n\bigr)$ in the obvious way to create new $N$-bundles over
$S^{4q}$. This then determines elements of $\pi_{4q}\text{BDiff}(N)$,
and it is clear that any two such elements arising in this way differ
by torsion. Thus tensoring with $\Q$ yields a well-defined element of
$\sigma(\kappa) \in \pi_{4q}\text{BDiff}(N)\otimes \Q.$ In particular,
we can represent $\kappa$ by a Hatcher disc bundle, and therefore can
view $\sigma(\kappa)$ as being represented by the $N$-bundle over
$S^{4q}$ determined by the Hatcher bundle.

It is implicit in the above construction that the gluing can take
place without creating self-intersections. This is a valid assumption
as we can always arrange for the embedding of $N$ into $\R^\infty$ to
guarantee this, irrespective of the Hatcher bundle being used. To see
this, note that we can adjust the embedding $\chi$ by introducing an
extra dimension, and utilising the function $\psi$ from the proof of
Lemma \ref{contractible}. Specifically, we nudge all but the (inner)
boundary of $N \setminus D^n$ towards the extra dimension by setting
$\bar{\chi}:N\setminus D^n \to \R^{\infty+1}$,
$\bar{\chi}(x):=\bigl(\chi(x),\psi(\text{dist}(x,\partial)\epsilon)\bigr),$
where $\partial$ here denotes the (inner) boundary of $N \setminus
D^n$, and $\epsilon>0$ is small enough for $N \setminus D^n$ to
contain an $\epsilon$-tubular neighbourhood of the (inner)
boundary. This clearly has the desired effect.
\begin{theorem}\label{injective}
Suppose that $q,n$ are admissible, with $n$ odd, $n \ge 11$, and $4q
\le n-4$. Then for a compact manifold $N^n$, the map
$\sigma:\pi_{4q}\BDiff_\partial (D^n)\otimes \Q \to
\pi_{4q}\BDiff(N)\otimes \Q$ is injective.
\end{theorem}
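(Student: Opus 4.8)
The plan is to construct an explicit left-inverse to $\sigma$ by collapsing the complementary part of the $N$-bundle. Given $\kappa \in \pi_{4q}\BDiff_\partial(D^n)\otimes\Q$, the element $\sigma(\kappa)$ is represented by an $N$-bundle obtained from the trivial bundle $S^{4q}\times N$ by cutting out $S^{4q}\times D^n$ and regluing a (Hatcher) disc bundle $E_\kappa \to S^{4q}$ along its trivial boundary $S^{4q}\times S^{n-1}$. The key observation is that this $N$-bundle comes with a natural fibrewise ``pinch'' map to a $D^n$-bundle: collapse everything outside the reglued disc. More precisely, since the boundary sphere bundle of $E_\kappa$ is trivial, we may fibrewise collapse the complement $(S^{4q}\times N)\setminus(S^{4q}\times \mathring{D}^n)$ onto its boundary $S^{4q}\times S^{n-1}$, and then cap off with a trivial disc bundle; what remains is precisely the double of $E_\kappa$ along its (trivial) boundary, i.e.\ a closed $S^n$-bundle, from which one recovers an element of $\pi_{4q}\BDiff_\partial(D^n)$ by the doubling homomorphism of Proposition~\ref{Hatcher_disc} — or, better, one collapses directly onto a $D^n$-bundle rel boundary. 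The composite of $\sigma$ with this collapse should be (rationally) the identity on $\pi_{4q}\BDiff_\partial(D^n)\otimes\Q$, up to the torsion ambiguity already built into the definition of $\sigma$.

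The key steps, in order, are: (i) make precise the fibrewise collapse map $c: E_{\sigma(\kappa)} \to \bar{E}_\kappa$, where $\bar{E}_\kappa$ is a $D^n$-bundle rel boundary; this requires care because one must collapse a manifold-with-boundary complement in a way that is smooth, fibrewise, and respects the embedding into $\R^\infty$ (here the ``nudging'' construction via $\psi$ from Lemma~\ref{contractible} that avoids self-intersections is what makes the geometry behave, and the dimension hypotheses $n\ge 11$, $4q\le n-4$ are needed so that the relevant embedding and handle-trading arguments stay in the stable range where $\pi_{4q}\BDiff_\partial(D^n)$ is understood). (ii) Check that $c$ induces a well-defined homomorphism $\tau: \pi_{4q}\BDiff(N)\otimes\Q \to \pi_{4q}\BDiff_\partial(D^n)\otimes\Q$ on the image of $\sigma$, independent of the torsion choices $\bar\kappa_i$. (iii) Verify $\tau\circ\sigma = \mathrm{id}$ rationally, by observing that collapsing the complement of the reglued disc returns exactly the disc bundle we glued in, up to a trivial bundle summand which dies rationally. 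Injectivity of $\sigma$ is then immediate.

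An alternative, and perhaps cleaner, route avoids constructing a collapse altogether: use the fact that the Hatcher disc bundles representing elements of $\pi_{4q}\BDiff_\partial(D^n)\otimes\Q$ are detected by a rational characteristic class (a Mumford--Morita--Miller / Kontsevich-type class, or more simply the class pulled back from $\BDiff_\partial(D^n) \to B\Omega^\infty_0(\cdots)$ that underlies Theorem~\ref{Hatcher_sphere}), and that this class survives under the gluing $\sigma$ because $\sigma(\kappa)$ restricted to a neighbourhood of the reglued disc is literally the original disc bundle. One then argues that this characteristic class is nonzero on $\sigma(\kappa)$ whenever it is nonzero on $\kappa$, giving injectivity directly. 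This sidesteps smoothness issues with collapse maps but requires importing the relevant characteristic-class machinery explicitly.

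The main obstacle I expect is step (i): producing a genuinely smooth fibrewise collapse (or, in the second approach, checking naturality of the detecting characteristic class under the gluing) while staying inside the embedded model in $\R^\infty$ and inside the pseudoisotopy stable range. The numerical hypotheses $n\ge 11$ and $4q\le n-4$ are almost certainly there precisely to guarantee that the complement $N\setminus D^n$ can be handled — e.g.\ that embeddings and isotopies of discs in $N$ behave as in the stable range, and that the torsion ambiguity in the definition of $\sigma$ is the \emph{only} ambiguity — so the bookkeeping around these ranges, rather than any single hard idea, will be where the real work lies.
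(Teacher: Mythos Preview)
Your proposal takes a genuinely different route from the paper, and the paper's route is much shorter. The paper simply invokes a theorem of Ebert (stated in the paper as Theorem~\ref{Eb}): for $N^n$ compact with $n\ge 11$ odd and $k\le n-4$, the map $(\mathrm{B}\mu)_*:\pi_k\BDiff_\partial(D^n)\otimes\Q \to \pi_k\BDiff(N)\otimes\Q$ is injective, where $\mu:\Diff_\partial(D^n)\to\Diff_\partial(N)$ extends disc diffeomorphisms by the identity. The only work in the paper's proof is to unwind the definition of $\mathrm{B}\mu$ and observe that $(\mathrm{B}\mu)_*$ coincides with the gluing map $\sigma$; injectivity is then immediate. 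In particular, the numerical hypotheses $n\ge 11$, $4q\le n-4$ are not there for embedding or handle-trading arguments as you speculate, but are exactly the hypotheses of Ebert's theorem.

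Your first approach has a genuine gap. A fibrewise collapse of the complement $N\setminus D^n$ onto its boundary is a continuous map, not a map of smooth fibre bundles, so it does not produce a map $\BDiff(N)\to\BDiff_\partial(D^n)$ between classifying spaces of \emph{smooth} diffeomorphism groups. To get a left inverse at the level of rational homotopy you would need something like a transfer or a detecting invariant, not a literal collapse; the difficulty you flag in step (i) is real and is not resolved by the nudging trick from Lemma~\ref{contractible}, which only controls embeddings into $\R^\infty$, not smoothness of a quotient. Your second approach via detecting characteristic classes is viable in principle and is essentially what lies behind Ebert's proof (which uses higher Franz--Reidemeister torsion / smooth torsion invariants), but carrying this out from scratch would amount to reproving that theorem. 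The efficient move is to cite it.
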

Before proving Theorem \ref{injective}, we recall the following
result from \cite{Ebert}. Given an embedded disc $D^n \subset N$ as in
the above theorem, let $\mu:\text{Diff}_\partial (D^n) \to
\text{Diff}_\partial(N)$ be the homomorphism obtained by extending
diffeomorphisms of the disc by the identity across $N$.
\begin{theorem}{\rm \cite[Theorem 1.4]{Ebert}} \label{Eb}
For every odd dimensional compact manifold $N^n$ with $n \ge 11$, the
induced map
\begin{equation*}
  (\text{B}\mu)_\ast:\pi_k\BDiff_\partial
  (D^n)\otimes \Q \to \pi_k\BDiff_\partial (N)\otimes \Q
\end{equation*}
is injective when $k \neq 1$ and $k \le n-4$. Furthermore, the
  same is true when the above map is composed with the map induced by
  the inclusion $\BDiff_\partial(N) \rh \BDiff(N)$.
\end{theorem}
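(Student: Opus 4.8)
The plan is to identify $\sigma$, after tensoring with $\Q$, with the homotopy-group map induced by the composition $\BDiff_\partial(D^n)\xrightarrow{\mathrm{B}\mu}\BDiff_\partial(N)\xrightarrow{j}\BDiff(N)$ that appears in Theorem \ref{Eb}, where $\mu$ extends a diffeomorphism of the embedded disc $D^n\subset N$ by the identity across $N$ and $j$ is the inclusion, and then to quote that theorem. (The admissibility of $q,n$ plays no role in the injectivity statement itself; it is recorded here only because these are the elements one wants to represent by Hatcher disc bundles, cf.\ Proposition \ref{Hatcher_disc}.)

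First I would unwind the construction of $\sigma$ at the level of bundles. Fix an integral class $\bar\kappa\in\pi_{4q}\BDiff_\partial(D^n)$ mapping to $\kappa$ after tensoring with $\Q$. By the Remark in Section \ref{lifting}, the embedded $D^n$-bundle $E\to S^{4q}$ corresponding to $\bar\kappa$ is the associated $D^n$-bundle of the principal $\Diff_\partial(D^n)$-bundle classified by $\bar\kappa$, and it is canonically trivialized on its boundary $S^{n-1}$-bundle. By definition, $\sigma(\kappa)$ is represented by the $N$-bundle obtained from $S^{4q}\times N$ by excising $S^{4q}\times D^n$ and regluing $E$ along that trivialized boundary. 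On the other hand, applying $\mathrm{B}\mu$ to the classifying map of $\bar\kappa$ yields a principal $\Diff_\partial(N)$-bundle whose associated $N$-bundle is --- precisely because $\mu$ acts by the identity outside $D^n$ --- obtained from $S^{4q}\times N$ by replacing the sub-bundle $S^{4q}\times D^n$ with $E$; that is, it is exactly the reglued bundle above. Hence $\sigma(\kappa)$ and $j_\ast(\mathrm{B}\mu)_\ast(\bar\kappa)$ are represented by the same $N$-bundle over $S^{4q}$, so they coincide in $\pi_{4q}\BDiff(N)\otimes\Q$.

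By naturality $(\mathrm{B}\mu)_\ast(\bar\kappa)\otimes\Q=(\mathrm{B}\mu)_\ast(\kappa)$ for the rationalized map, so the previous paragraph shows $\sigma(\kappa)=j_\ast(\mathrm{B}\mu)_\ast(\kappa)$ for every $\kappa$ in the image of $\pi_{4q}\BDiff_\partial(D^n)\to\pi_{4q}\BDiff_\partial(D^n)\otimes\Q$; since this image spans and both sides are $\Q$-linear, $\sigma=j_\ast\circ(\mathrm{B}\mu)_\ast$ as maps of $\Q$-vector spaces (consistent with the well-definedness of $\sigma$ noted above, since the torsion ambiguity among the lifts $\bar\kappa_i$ disappears rationally). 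It then remains to apply Theorem \ref{Eb} with $k=4q$: the hypotheses $q\ge1$ (so $4q\ne1$), $4q\le n-4$, and $n$ odd with $n\ge11$ are exactly what that theorem requires for $j_\ast\circ(\mathrm{B}\mu)_\ast$ to be rationally injective, whence $\sigma$ is injective.

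The only genuine work is the bundle-level identification in the second paragraph --- verifying that ``excise the trivial disc sub-bundle of $S^{4q}\times N$ and glue in the modified disc bundle'' and ``extend the structure-group action by the identity outside $D^n$'' describe the same $N$-bundle, and that the embedded model of $E$ used to define $\sigma$ matches the associated-bundle model. Both are routine once the definitions are aligned; all of the substantive content is supplied by Theorem \ref{Eb}.
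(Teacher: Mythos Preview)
There is a mismatch here. The statement you were given is Theorem~\ref{Eb}, which in the paper is simply a \emph{citation} of \cite[Theorem~1.4]{Ebert}; the paper offers no proof of it. Your proposal is not a proof of Theorem~\ref{Eb} at all --- indeed it explicitly \emph{quotes} Theorem~\ref{Eb} as an input, which would be circular. What you have actually written is a proof of Theorem~\ref{injective} (the statement that $\sigma$ is injective).

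Regarded as a proof of Theorem~\ref{injective}, your argument is correct and is essentially identical to the paper's: both identify $\sigma$ with the rationalized map induced by $\mathrm{B}\mu$ by unwinding the associated-bundle description, observe that the glued $N$-bundle representing $\sigma(\kappa)$ is precisely the bundle classified by $\mathrm{B}\mu$ applied to the Hatcher disc bundle, and then invoke Theorem~\ref{Eb}. If anything, you are slightly more careful than the paper in keeping track of the further inclusion $j:\BDiff_\partial(N)\hookrightarrow\BDiff(N)$, writing $\sigma=j_\ast\circ(\mathrm{B}\mu)_\ast$ and appealing to the ``Furthermore'' clause of Theorem~\ref{Eb}; the paper leaves this composition implicit.
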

\begin{remark}
  The definition of $\text{Diff}_\partial (D^n)$ used in \cite{Ebert}
  is slightly different from the convention adopted in this paper:
  Ebert defines this to be the group of diffeomorphisms of the disc
  which agree with the identity in some neighbourhood of the
  boundary. This discrepancy does not matter, as the inclusion of the
  latter group into ours is a weak homotopy equivalence. This can be
  seen, for example, via an easy modification of the proof of
  \cite[Proposition 1.3]{I}.
\end{remark}
\begin{proof}[Proof of Theorem \ref{injective}]
Let us first consider the map $\text{B}\mu$. Given a group
homomorphism $\theta:G \to H$, in order to define the induced map of
classifying spaces $\text{B}\theta$, we take the universal bundle
$\text{E}G\to \text{B}G$ and form the associated bundle $\text{E}G
\times_\theta H.$ We can view this associated bundle as a principal
$H$-bundle over $\text{B}G$, and as such, it is determined by a
classifying map $\text{B}\theta:\text{B}G \to \text{B}H$.

In the specific case of the map $ \text{B}\mu :
  \BDiff_\partial (D^n) \to \BDiff_\partial (N)$, we start with the
universal $\Diff_\partial (D^n)$-bundle $\EDiff_\partial (D^n) \to
\BDiff_\partial (D^n),$ and form the associated bundle
\begin{equation*}
  \EDiff_\partial (D^n) \times_\mu \Diff_\partial
  (N) \to \BDiff_\partial (D^n).
\end{equation*}
A given point $p\in \BDiff_\partial (D^n)$
represents the image $\text{im}(\phi_p)$ of some
embedding $\phi_p: D^n \rh \R^\infty$, satisfying certain boundary
conditions as explained in Section \ref{models}. The corresponding
fibre of the bundle $\EDiff_\partial (D^n)\to
  \BDiff_\partial (D^n)$ is then the group of self-diffeomorphisms of
  this embedded disc (respecting the boundary conditions). Thus the
  corresponding fibre of the associated bundle above is the group of
  self-diffeomorphisms of the manifold $(N\setminus
  D^n)\cup_\partial \text{im}(\phi_p(D^n))$ which agree with the
  identity outside $\text{im}(\phi_p(D^n))$, and where $N\setminus
  D^n$ is embedded in a fixed way into $\R^\infty$.

On the other hand, we have a universal
$\text{Diff}_\partial(N)$-bundle $\text{EDiff}_\partial(N) \to
\text{BDiff}_\partial(N)$, where a point in the base can be identified
with the image of an embedding of $N$ into $\R^\infty$, and the
corresponding fibre is the group of diffeomorphisms of this embedded
copy of $N$. Given the definition of $B\theta$ above, we have that
$\EDiff_\partial (D^n) \times_\mu \Diff_\partial (N)$ is the bundle
that results from pulling back the universal bundle
$\EDiff_\partial(N)$ via $\text{B}\mu$, i.e., the following diagram of
fibre bundles commutes:
\begin{center}
\begin{tikzcd}
  \EDiff_\partial (D^n)\times_\mu \Diff_\partial (N)\arrow{d}
  \arrow{r}{(\mathrm{B}\mu)^*}& \EDiff_\partial (N)\arrow{d}
\\
 \BDiff_\partial (D^n)\arrow{r}{\mathrm{B}\mu}&  \BDiff_\partial (N)
\end{tikzcd}
\end{center}
Thus $\text{B}\mu$ must be the
map which takes $p \in \BDiff_\partial (D^n)$ to the point in
$\BDiff_\partial(N)$ which represents the embedding
\begin{equation*}
  (N\setminus D^n)\cup_\partial
\text{im}(\phi_p(D^n)) \rh \R^\infty\ .
\end{equation*}
Now suppose we are given an element $\kappa \in
\pi_{4q}\BDiff_\partial(D^n)\otimes \Q$, as in the statement of
the theorem. This element can be represented by an $S^{4q}$-indexed
family of discs embedded into in $\R^\infty$ (with appropriate
boundary conditions), which constitute a Hatcher disc bundle. If we
compose the map $S^{4q} \to \BDiff_\partial(D^n)$ which
determines the Hatcher bundle with the map $\text{B}\mu$ described
above, the resulting element of $\pi_{4q}\BDiff_\partial(N)$ is
clearly represented by the $S^{4q}$-indexed family of embedded copies
of $N$ in $\R^\infty$ which are formed from a standard copy of
$N\setminus D^n$ by gluing in, for given $z \in S^{4q}$, the embedded
copy of $D^n$ corresponding to $z$ in the Hatcher bundle. It is now
evident that $(\text{B}\mu)_\ast(\kappa)$ agrees with
$\sigma(\kappa).$ The theorem now follows immediately from Theorem
\ref{Eb}.
\end{proof}
As a corollary, we obtain
\begin{corollary}\label{injection}
Suppose that $q,n$ are admissible, with $n$ even, $n \ge 10$, and $4q
\le n-3$. Then for a compact manifold $M^n$, the
homomorphism
\begin{equation*}
\sigma:\pi_{4q}\BDiff_\partial (D^{n+1})\otimes \Q \to
\pi_{4q}\BDiff_\partial(M \times I)\otimes \Q
\end{equation*}
is injective.
\end{corollary}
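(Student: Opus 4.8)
The plan is to obtain the corollary as a direct application of Theorem~\ref{injective} (and of its proof) to the compact manifold $N := M\times I$, which has dimension $n+1$. The first step is purely bookkeeping: since $n$ is even, $n+1$ is odd; the hypothesis $n\ge 10$ gives $n+1\ge 11$; and the hypothesis $4q\le n-3$ is precisely the requirement $4q\le (n+1)-4$ that Theorem~\ref{injective} imposes on the ambient dimension. Moreover, because $n+1$ is odd, admissibility of the pair $(q,n+1)$ means exactly $n+1>N(q)$, which by the admissibility convention of Section~\ref{Hatcher} is the very same condition as admissibility of the pair $(q,n)$ with $n$ even. Hence every hypothesis of Theorem~\ref{injective} holds after replacing the ambient dimension $n$ there by $n+1$ and the manifold $N^n$ there by $M\times I$ (when $\partial M\neq\emptyset$ this manifold has corners, which we smooth in the usual way before applying the theorem).

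The only genuine point to check is that the targets match. Theorem~\ref{injective} provides an injection into $\pi_{4q}\BDiff(M\times I)\otimes\Q$, whereas the corollary asks for injectivity into $\pi_{4q}\BDiff_\partial(M\times I)\otimes\Q$. I would resolve this by recalling that in the construction of $\sigma$ the Hatcher disc bundles are glued into the \emph{interior} of $N=M\times I$ (the fixed embedded disc $D^{n+1}\subset N$ is taken in the interior since $\partial N\neq\emptyset$), so the resulting $(M\times I)$-bundle over $S^{4q}$ is trivial near the whole boundary $\partial(M\times I)$ and is therefore classified by a map into $\BDiff_\partial(M\times I)$. Thus $\sigma$ factors as
\begin{equation*}
\pi_{4q}\BDiff_\partial(D^{n+1})\otimes\Q \xrightarrow{\ \sigma\ } \pi_{4q}\BDiff_\partial(M\times I)\otimes\Q \xrightarrow{\ \iota_\ast\ } \pi_{4q}\BDiff(M\times I)\otimes\Q ,
\end{equation*}
where $\iota_\ast$ is induced by the inclusion $\BDiff_\partial(M\times I)\hookrightarrow\BDiff(M\times I)$, and the composite $\iota_\ast\circ\sigma$ is exactly the map shown to be injective by Theorem~\ref{injective}. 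Since injectivity of a composition forces injectivity of its first map, $\sigma$ is injective, as claimed. Equivalently, one may observe that the proof of Theorem~\ref{injective} identifies $\sigma$ with $(\mathrm{B}\mu)_\ast$ for the extension-by-the-identity homomorphism $\mu:\Diff_\partial(D^{n+1})\to\Diff_\partial(M\times I)$, and then invoke the first assertion of Theorem~\ref{Eb} with $k=4q\le (n+1)-4=n-3$ and $N=M\times I$; the ``furthermore'' clause of Theorem~\ref{Eb} is not needed for the corollary.

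I do not anticipate any substantive obstacle: the argument is a dimension shift together with a target-matching, both routine. The only things demanding care are the parity and admissibility bookkeeping recorded above, and the observation that gluing into the interior makes the boundary-triviality automatic, so that $\sigma$ genuinely lands in $\BDiff_\partial(M\times I)$ rather than merely in $\BDiff(M\times I)$.
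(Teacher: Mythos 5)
Your argument is correct in substance, and the dimension/admissibility bookkeeping and the target-matching observation (that $\sigma$ factors through $\BDiff_\partial(M\times I)$ because the gluing is into the interior, so the "furthermore'' clause of Theorem~\ref{Eb} is not needed) are exactly right. Where you diverge from the paper is in the case $\partial M\neq\emptyset$: the paper does not smooth the corners of $M\times I$ and apply Theorem~\ref{Eb} directly. Instead it passes to the double $N$ of $M$ (a closed manifold, so $N\times I$ has smooth boundary and no corners), uses the composition of extension-by-identity maps $\Diff_\partial(D^{n+1})\to\Diff_\partial(M\times I)\to\Diff_\partial(N\times I)$, applies Theorem~\ref{Eb} to $N\times I$, and deduces injectivity of the first factor $(\text{B}\mu')_\ast$ from injectivity of the composite. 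The advantage of the paper's route is that $\Diff_\partial(M\times I)$ --- for which the $\infty$-jet boundary conditions are imposed along a corner stratum --- appears only as an intermediate stage of a factorization, never as the domain to which Ebert's theorem must be applied; your route instead invokes Theorem~\ref{Eb} for the smoothed $M\times I$, which requires (and you do not supply) the additional justification that the diffeomorphism groups fixing the boundary of $M\times I$ with corners and of its smoothing are weakly equivalent, given the specific $\infty$-jet conventions in force. That equivalence is true and not difficult, so your proof is not wrong, but the paper's doubling trick circumvents the question entirely. For $M$ closed, the two arguments coincide.
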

\begin{proof}
In the case where $M$ is closed, the result is immediate from Theorem
\ref{injective}.

If $\partial M \neq \emptyset$, let $N$ be the double of $M$, so $N$
is a closed manifold. Then the inclusion of spaces $D^n
\hookrightarrow M \times I \hookrightarrow N \times I$ gives rise to a
commutative diagram of spaces
\begin{center}
\begin{tikzcd}
 & \Diff_\partial (M \times I) \arrow[rd]  & \\
  \Diff_\partial (D^n) \arrow{ru}{\mu'} \arrow{rr}{\mu} &  &
  \Diff_\partial (N \times I) 
\end{tikzcd}
\end{center}
and in turn to a commutative diagram of homotopy groups
\begin{center}
\begin{tikzcd}
 & \pi_{4q}\BDiff_\partial (M \times I) \arrow[rd]  & \\
  \pi_{4q}\BDiff_\partial (D^n) \arrow{ru}{(\text{B}\mu')_\ast}
  \arrow{rr}{(\text{B}\mu)_\ast} &  & \pi_{4q}\BDiff_\partial (N \times I).
\end{tikzcd}
\end{center}
Now by Theorem \ref{Eb} the lower map $(\text{B}\mu)_\ast$ is an
injection. Hence by commutativity, we deduce that
$(\text{B}\mu')_\ast:\pi_{4q}\text{BDiff}_\partial (D^n) \to
\pi_{4q}\text{BDiff}_\partial (M \times I)$ is also injective. The
rest of the argument now goes through exactly as in the proof of
Theorem \ref{injective}.
\end{proof}
We next combine Corollary \ref{injection} with the following result,
which was originally due to Farrell-Hsiang \cite{FH}, with an improved
range of $i$ and $n$ values provided by \cite[Theorem A]{KRW}:
\begin{theorem}\label{Farrell}
Suppose that $n \ge 11$ is odd. Then for $i \le n-4$ we
have
\begin{equation*}
  \pi_i\BDiff_\partial (D^n) \otimes \Q \cong
\begin{cases}
\Q \quad \text{ if }i \equiv 0 \text{ mod }4; \\
0 \quad \, \text{ otherwise.} \\
\end{cases}
\end{equation*}
\end{theorem}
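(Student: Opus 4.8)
The plan is to deduce this from the literature, since it is precisely the theorem of Farrell--Hsiang \cite{FH}, in the quantitatively sharper form recorded in \cite[Theorem A]{KRW} (which is where the range $i\le n-4$ comes from). Let me indicate the shape of the argument one assembles. First I would trade the classifying space for the group: since $\BDiff_\partial(D^n)\simeq B\Diff_\partial(D^n)$ one has $\pi_i\BDiff_\partial(D^n)\cong\pi_{i-1}\Diff_\partial(D^n)$ for $i\ge 1$, so the claim is equivalent to $\pi_j\Diff_\partial(D^n)\otimes\Q$ being $\Q$ for $j\equiv 3\pmod 4$ and $0$ otherwise, in the range $j\le n-5$.

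Next I would pass to block diffeomorphisms. Writing $\wt{\Diff}_\partial(D^n)$ for the block diffeomorphism group, a surgery-theoretic argument (as in \cite{FH}) shows that its homotopy groups are finite in positive degrees -- rationally, the block structure space of a disc is trivial because the surgery obstruction maps for discs are rational isomorphisms -- so $\wt{\Diff}_\partial(D^n)$ is rationally trivial in the degrees we need. The inclusion $\Diff_\partial(D^n)\hookrightarrow\wt{\Diff}_\partial(D^n)$ fits into a fibration sequence $\Diff_\partial(D^n)\to\wt{\Diff}_\partial(D^n)\to\wt{\Diff}_\partial(D^n)/\Diff_\partial(D^n)$, whose long exact sequence therefore gives $\pi_j\Diff_\partial(D^n)\otimes\Q\cong\pi_{j+1}\bigl(\wt{\Diff}_\partial(D^n)/\Diff_\partial(D^n)\bigr)\otimes\Q$ for $j\ge 1$. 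The problem is thus reduced to the rational homotopy of the ``smoothing'' quotient, and this is where algebraic $K$-theory enters: by the stable parametrized $h$-cobordism theorem together with Igusa's stability theorem \cite{I} (and, for the improved range, the input of \cite{KRW}), the quotient agrees in the relevant range with a fixed infinite loop space built, up to a degree shift, from the smooth Whitehead spectrum $\mathbf{Wh}^{\mathrm{Diff}}(\ast)$ equipped with the canonical involution (turn a stable pseudoisotopy of $D^n\times I$ upside down). One then uses Waldhausen's splitting and linearization, $\pi_m\mathbf{Wh}^{\mathrm{Diff}}(\ast)\otimes\Q\cong\pi_m K(\Z)\otimes\Q$ for $m\ge 1$, and Borel's theorem $\pi_m K(\Z)\otimes\Q=\Q$ for $m\equiv 1\pmod 4$, $m\ge 5$ (and $0$ otherwise). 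Rationally the $\Z/2$-(homotopy) fixed points select one eigenspace of the involution; a sign computation shows that for $n$ odd this is the entire rational homotopy, placed -- once the shifts are composed -- in degrees $j\equiv 3\pmod 4$. Translating back through $\pi_i\BDiff_\partial(D^n)\cong\pi_{i-1}\Diff_\partial(D^n)$ gives $\Q$ for $i\equiv 0\pmod 4$ and $0$ otherwise.

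I expect the two genuinely substantial ingredients -- the ones to cite rather than redo -- to be: (a) the bookkeeping of the involution on $\mathbf{Wh}^{\mathrm{Diff}}(\ast)\otimes\Q$ and the precise homotopical shifts, which is what produces the period-$4$ pattern and the dichotomy between $n$ odd and $n$ even (for $n$ even one lands in the complementary, rationally trivial, eigenspace, which is why the hypothesis ``$n$ odd'' is needed); this is the core of \cite{FH}. And (b) the enlargement of the concordance stable range from Igusa's bound (roughly $n/3$, written $\phi(n)$ above) to $i\le n-4$ for $n\ge 11$, which is exactly \cite[Theorem A]{KRW}.
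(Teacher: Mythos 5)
Your approach matches the paper's exactly: the paper simply states this result as a citation to Farrell--Hsiang \cite{FH}, with the improved range of validity coming from \cite[Theorem A]{KRW}, and gives no proof of its own. Your supplementary sketch of the underlying argument (reducing to $\Diff_\partial(D^n)$, passing through block diffeomorphisms and the surgery-theoretic triviality of the block structure set for discs, invoking the stable parametrized $h$-cobordism theorem and the concordance stable range, Waldhausen's rational identification of $\mathbf{Wh}^{\mathrm{Diff}}(\ast)$ with $K(\Z)$, Borel's computation, and the involution dichotomy that distinguishes $n$ odd from $n$ even) is an accurate account of how those references establish the result.
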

\begin{corollary}\label{M_nonzero_downstairs}
  Suppose that $q,n$ are admissible, with $n$ even, $n \ge 10$, and
  $4q \le n-3$. Then for a compact manifold $M$ we have
\begin{equation*}
    \pi_{4q}\BDiff_\partial(M \times I) \otimes \Q \neq 0.
\end{equation*}     
\end{corollary}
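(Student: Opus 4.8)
The plan is to deduce Corollary \ref{M_nonzero_downstairs} by combining Corollary \ref{injection} with Theorem \ref{Farrell}. The key observation is that Corollary \ref{injection} provides an injection
\begin{equation*}
\sigma:\pi_{4q}\BDiff_\partial (D^{n+1})\otimes \Q \to
\pi_{4q}\BDiff_\partial(M \times I)\otimes \Q,
\end{equation*}
so it suffices to show that the source group $\pi_{4q}\BDiff_\partial (D^{n+1})\otimes \Q$ is non-zero. For this we want to invoke Theorem \ref{Farrell} with the odd dimension $n+1$ in place of $n$ (recall $n$ is even, so $n+1$ is odd), and with $i = 4q$.

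The steps, in order, are as follows. First I would check that the hypotheses of Theorem \ref{Farrell} are met for the disc of dimension $n+1$: we need $n+1 \ge 11$, which follows from $n \ge 10$, and we need $4q \le (n+1) - 4 = n-3$, which is exactly the hypothesis $4q \le n-3$ in the statement. Second, since $4q \equiv 0 \bmod 4$ by construction, Theorem \ref{Farrell} gives $\pi_{4q}\BDiff_\partial(D^{n+1}) \otimes \Q \cong \Q \neq 0$. Third, I would check that the hypotheses of Corollary \ref{injection} are exactly those assumed here (admissibility of $q,n$ with $n$ even, $n \ge 10$, $4q \le n-3$), so that the injection $\sigma$ is available. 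Composing: a non-zero element of $\pi_{4q}\BDiff_\partial(D^{n+1})\otimes \Q$ maps under the injective homomorphism $\sigma$ to a non-zero element of $\pi_{4q}\BDiff_\partial(M \times I) \otimes \Q$, giving the desired conclusion.

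There is essentially no substantive obstacle here — this is a bookkeeping corollary that merely splices together two previously established results, and the only thing to be careful about is the dimension shift (applying Theorem \ref{Farrell} to $D^{n+1}$ rather than $D^n$) and confirming that the numerical inequalities line up. In particular one should note that Corollary \ref{injection} has already absorbed the more delicate case analysis (closed $M$ versus $M$ with boundary, via doubling), so nothing further is needed on that front. I would write the proof in a few lines.

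\begin{proof}
By Corollary \ref{injection}, under the stated hypotheses the homomorphism
\begin{equation*}
\sigma:\pi_{4q}\BDiff_\partial (D^{n+1})\otimes \Q \to
\pi_{4q}\BDiff_\partial(M \times I)\otimes \Q
\end{equation*}
is injective, so it suffices to show that $\pi_{4q}\BDiff_\partial (D^{n+1})\otimes \Q \neq 0$. Since $n$ is even we have that $n+1$ is odd, and $n \ge 10$ gives $n+1 \ge 11$; moreover $4q \le n-3 = (n+1)-4$. Hence Theorem \ref{Farrell}, applied with the odd dimension $n+1$ and $i = 4q \equiv 0 \bmod 4$, yields
\begin{equation*}
\pi_{4q}\BDiff_\partial (D^{n+1}) \otimes \Q \cong \Q \neq 0.
\end{equation*}
Composing a non-zero element with the injection $\sigma$ gives a non-zero element of $\pi_{4q}\BDiff_\partial(M \times I) \otimes \Q$, as claimed.
\end{proof}
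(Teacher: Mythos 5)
Your proof is correct and matches exactly what the paper intends: the corollary is stated immediately after Theorem \ref{Farrell} with the explicit lead-in ``We next combine Corollary \ref{injection} with the following result,'' and no further argument is given. You have correctly handled the dimension shift (applying Theorem \ref{Farrell} to $D^{n+1}$ with $n+1$ odd, $n+1 \ge 11$, and $4q \le (n+1)-4 = n-3$) and spliced the two results together as the paper implicitly does.
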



\subsection{Showing $\pi_{4q}\calM^{\pos_*}_\partial(M \times I)_g\neq 0$}\label{m+}
Recall that $\calM^{\pos_k}_\partial(M \times I)_{\tilde g}$ denotes
the moduli space of metrics $\bar g$ on $M\times I$ with $Sc_{k}(\bar
g)>0$, satisfying certain boundary conditions which imply that $\tilde
g:=\bar g|_{\del (M\times I)}$ has $Sc_{k-1}(\tilde g)>0$. In the case
of scalar curvature, i.e., when $k=n+1=\dim (M\times I)$, we require
both $\bar g$ and $\tilde g$ to have positive scalar curvature.

Our next task is to show that non-zero elements in
$\pi_{4q}\text{BDiff}_\partial(M \times I) \otimes \Q$ in the image of
$\sigma$ can be lifted to non-zero elements in
$\pi_{4q}\calM^{\pos_*}_\partial (M \times I)_{\tilde g} \otimes \Q$. The
existence of such lifts will play a crucial role in the proof of the
main theorem.
  
\begin{proposition}\label{M-lift}
With $M$, $\dim M =n$, and $q$ as in Corollary
\ref{M_nonzero_downstairs}, assume that $M$ admits a metric with
$Sc_{k-1}>0$ for some $2\le k \le n+1$, and let
$\calM^{\pos_*}_{\partial}(M \times I)_{\tilde g}$ denote
the moduli space of metrics with $Sc_k>0$ on $M\times I$ if $k\le n$, or
the space of positive scalar curvature metrics if
$k=n+1$. Then 
$\pi_{4q}\calM^{\pos_*}_\partial (M \times I)_{\tilde g} \otimes \Q\neq 0$.
\end{proposition}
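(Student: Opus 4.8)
The plan is to lift the non-zero elements of $\pi_{4q}\BDiff_\partial(M \times I) \otimes \Q$ produced in Corollary \ref{M_nonzero_downstairs} — which by Corollary \ref{injection} lie in the image of the injective homomorphism $\sigma : \pi_{4q}\BDiff_\partial(D^{n+1}) \otimes \Q \to \pi_{4q}\BDiff_\partial(M \times I) \otimes \Q$ — along the inclusion $\calM^{\pos_*}_\partial(M \times I)_{\tilde g} \rh \calM_\partial(M \times I)_{\tilde g} = \BDiff_\partial(M \times I)$, with the lift supplied by the fibrewise positively curved metrics on Hatcher disc bundles from Lemma \ref{collar}. First I would fix a metric $g_M$ on $M$ with $Sc_{k-1} > 0$ (possible by hypothesis); then $g_M + dt^2$ on $M \times I$ has $Sc_k > 0$, and has positive scalar curvature if $k = n+1$. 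Take $\tilde g$ to be the restriction of $g_M + dt^2$ to $\partial(M \times I)$, and fix an embedded disc $D^{n+1}$ in the interior of $M \times I$. Mirroring the collar construction in the proof of Lemma \ref{collar} — which rests on the surgery-type techniques of \cite[Theorem 2.1]{Wo} together with the local roundness deformation of \cite[Proposition 2.3]{WW} — I would deform $g_M + dt^2$ in a neighbourhood of $D^{n+1}$, keeping $Sc_k > 0$, so that in an annular neighbourhood of a slightly smaller concentric disc $D'$ the metric agrees with an annular band in a round sphere, meeting $\partial D'$ in a round $n$-sphere of radius $R$ with all principal curvatures equal to a prescribed $\alpha \in [0, 1/R)$. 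Call the resulting metric $\hat g$; it is unchanged, hence still product-like, near $\partial(M \times I)$.

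Now choose $R$ and $\alpha$ in Lemma \ref{collar} to match those above. By Theorem \ref{Farrell}, $\pi_{4q}\BDiff_\partial(D^{n+1}) \otimes \Q \cong \Q$ under the present hypotheses (indeed $n+1$ is odd, $n+1 \ge 11$, and $4q \le (n+1) - 4$), so fix a non-zero $\kappa$ in this group. By Proposition \ref{Hatcher_disc} (the pair $q,n+1$ is admissible, since the pair $q,n$ is and $n$ is even), $\kappa$ is represented — up to a non-zero rational multiple — by a Hatcher disc bundle over $S^{4q}$; by Lemma \ref{collar} this bundle carries a fibrewise metric with $Sc_2 > 0$, hence $Sc_k > 0$ for every $k \ge 2$ and positive scalar curvature when $k = n+1$, with each fibre boundary a round $n$-sphere of radius $R$, principal curvatures $\alpha$, and a round annular collar. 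Gluing this Hatcher disc bundle fibrewise into the fixed bundle $S^{4q} \times \bigl((M \times I) \setminus D'\bigr)$, whose fibres carry the fixed metric $\hat g$, along the common round annular collars produces an $(M \times I)$-bundle over $S^{4q}$ equipped with a fibrewise $Sc_k > 0$ metric which is product-like near $\partial(M \times I)$ with boundary metric $\tilde g$; by the construction of $\sigma$ its underlying bundle represents that non-zero rational multiple of $\sigma(\kappa)$ (non-zero by the injectivity of $\sigma$ and $\kappa \neq 0$). Finally, applying the analogue for $(M \times I)$-bundles of Lemma \ref{moduli_lifts} — the argument of Section \ref{lifting}, although stated there for disc bundles, goes through verbatim for bundles with fibre $M \times I$ satisfying the boundary conditions defining $\calM^{\pos_*}_\partial(M \times I)_{\tilde g}$, and with the round polar-cap boundary condition of radius $R$ in place of the unit round hemisphere, as anticipated in Section \ref{Hatcher} — this fibrewise metric determines a class in $\pi_{4q}\calM^{\pos_*}_\partial(M \times I)_{\tilde g} \otimes \Q$ mapping, under the inclusion above, to that non-zero rational multiple of $\sigma(\kappa)$. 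In particular the class is non-zero, so $\pi_{4q}\calM^{\pos_*}_\partial(M \times I)_{\tilde g} \otimes \Q \neq 0$.

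The main obstacle is the geometric step in the first paragraph: producing, inside $M \times I$, a metric with $Sc_k > 0$ that coincides near an interior disc with exactly the round boundary collar supplied by Lemma \ref{collar}, and checking that gluing two metrics with $Sc_k > 0$ along such isometric round annuli yields a smooth metric still satisfying $Sc_k > 0$. This is the same sort of construction as in the proof of Lemma \ref{collar} and uses the same inputs from \cite{Wo} and \cite{WW}; in the case $k = n+1$ it reduces to the classical fact that a positive scalar curvature metric can be deformed to be round near a point. A secondary, more routine point is to confirm that the lifting machinery of Section \ref{lifting} — the metric restriction map of Lemma \ref{embedding_to_metric}, the freeness of the relevant diffeomorphism action, and the resulting lift along $\calM^{\pos_*}_\partial(M \times I)_{\tilde g} \rh \BDiff^{\m}_\partial(M \times I)$ — carries over unchanged from disc bundles to $(M \times I)$-bundles with the stated boundary conditions.
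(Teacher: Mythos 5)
Your high-level strategy matches the paper's: set up the commutative square relating $\calM^{\pos_*}_\partial$ and $\BDiff_\partial$ for $D^{n+1}$ and $M\times I$, represent a non-zero class in $\pi_{4q}\BDiff_\partial(D^{n+1})\otimes\Q$ by a Hatcher disc bundle carrying a fibrewise $Sc_2>0$ metric via Lemma~\ref{collar}, glue it into a trivial $(M\times I)$-bundle with a background $Sc_k>0$ metric, and then lift along the inclusion using the machinery of Section~\ref{lifting}, finishing with the injectivity of $\sigma$.

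The genuine gap is in the gluing step, which you yourself flag as ``the main obstacle.'' You propose to arrange, inside $M\times I$, an $Sc_k>0$ metric whose collar near the removed disc is \emph{exactly isometric} to the round annular collar of the Hatcher fibres from Lemma~\ref{collar}, and then to glue along these isometric collars. This requires independently controlling the ambient round curvature of the collar, the boundary radius $R$, and the principal curvature $\alpha$; but in Lemma~\ref{collar} these are not independent — $\alpha$ is determined by $R$ and the ambient radius $\rho$ via $\alpha=\sqrt{\rho^2-R^2}/(\rho R)$ — so an exact match is not available without substantially reworking the construction. The paper sidesteps this entirely: it deforms the product metric to a round metric of curvature $1$ in a small disc $D(\epsilon)$, records that $\partial D(\epsilon)$ is a round sphere of radius $\sin(\epsilon)$ with principal curvatures $\cot(\epsilon)$, chooses the Hatcher collar parameters $R=\sin(\epsilon)$ and $\alpha\in(\cot(\epsilon),1/\sin(\epsilon))$, and then glues using \cite[Theorem A]{RW} (the Reiser--Wraith generalization of the Perelman gluing theorem), which requires only a \emph{strict domination} of principal curvatures, not an isometric collar. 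That gluing theorem is the essential ingredient missing from your proposal; without it, or without a worked-out construction producing genuinely isometric round collars on both sides while keeping $Sc_k>0$ globally, the argument does not close. The remaining steps of your proposal — admissibility, Proposition~\ref{Hatcher_disc}, the extension of Lemma~\ref{moduli_lifts} from disc bundles to $(M\times I)$-bundles, and the final diagram chase — do track the paper correctly.
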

\begin{proof}
We have a commutative square as follows:
\begin{center}
\begin{tikzcd}
  \pi_{4q}\calM^{\pos_*}_\partial (D^{n+1})_{h_{st}}\otimes \Q \arrow{r}{\tilde{\sigma}} \arrow{d}  & \pi_{4q}\calM^{\pos_*}_\partial (M^n \times I)_{\tilde g}\otimes \Q \arrow{d}
  \\
\pi_{4q}\BDiff_\partial (D^{n+1})\otimes \Q  \arrow{r}{\sigma} & \pi_{4q}\BDiff_\partial(M^n \times I)\otimes \Q.
\end{tikzcd}
\end{center}
Here, the vertical maps are induced by inclusion, as explained above
in Section \ref{lifting}. The upper horizontal map $\tilde{\sigma}$ is
defined as follows. Fix a metric with $Sc_{k-1}>0$ on $M$. We can
deform the product metric on $M \times I$, which has $Sc_k>0$ if $k\le
n$ or positive scalar curvature if $k=n+1$, in an
$\epsilon$-neighbourhood about any interior basepoint to a round
metric of curvature 1 say, preserving the global curvature
condition. After fixing a basepoint $x_0 \in M$, let us choose the
point $(x_0,1/2) \in M \times I$ to be the basepoint of $M \times I$
and thus the centrepoint of the $\epsilon$-disc $D(\epsilon)$. Let
$\bar g$ denote the resulting deformed product
metric on $M \times I$, and as usual we set $\tilde g:= \bar
  g|_{\del(M\times I)}$ and $g:=\bar{g}|_{M \times \{0\}}$. Let $h_{st}$ denote
the (round) metric on the boundary of $D(\epsilon)$. Observe that
$h_{st}=\sin^2(\epsilon)ds^2_n$, i.e. the boundary created by removing
$D(\epsilon)$ from $(M \times I,g)$ has radius $\sin(\epsilon).$
Moreover the prinicpal curvatures at the boundary of $D(\epsilon)$ are
all equal to $\cot(\epsilon).$

By Proposition \ref{Hatcher_disc} each element of
$\pi_{4q}\BDiff_\partial (D^{n+1})\otimes \Q$ can be represented
by a Hatcher disc bundle. By Lemma \ref{collar}, setting
$R=\sin(\epsilon)$, each element of $\pi_{4q}\BDiff_\partial
(D^{n+1})\otimes \Q$ can be represented by a Hatcher disc bundle
equipped with a fibrewise $Sc_2>0$ metric, with boundary metric
$h_{st}$, and all principal curvatures at the
boundary equal to any given $\alpha \in [0,1/\sin(\epsilon))$. In
  particular, by Lemma \ref{moduli_lifts}, each element of
  $\pi_{4q}\BDiff_\partial (D^{n+1})\otimes \Q$ can be lifted to 
  $\pi_{4q}\calM^{\pos_*}_\partial (D^{n+1})_{h_{st}}\otimes \Q$

Now consider the product $\bigl(S^{4q} \times (M \times I)\bigr)
\setminus \bigl(S^{4q} \times D(\epsilon)\bigr)$, where $D(\epsilon)$
denotes the round $\epsilon$-disc, viewed as a trivial bundle over
$S^{4q}$. Equip each fibre with the restriction of $g$, to give a
fibrewise $Sc_k>0$ metric. By \cite[Theorem A]{RW}, we can smoothly
glue the Hatcher disc bundle and its fibrewise $Sc_2>0$ metric into
this product, provided that the principal curvatures $\alpha$ at the
boundary of the Hatcher bundle are greater than those at the boundary
of $D(\epsilon)$, i.e. $\alpha>\cot(\epsilon)$. We can clearly choose
such an $\alpha$ provided that $1/\sin(\epsilon)>\cot(\epsilon),$
which is trivially true.

Thus we obtain a fibrewise metric  with $Sc_k>0$ on
the glued bundle, which is of course precisely the bundle produced in
the construction of the representative for $\sigma(\kappa)$. Given the
fibrewise metric satisfies $Sc_k>0$,
by Lemma \ref{moduli_lifts}, the element
$\sigma(\kappa)$ can be lifted to an element in the group 
$\pi_{4q}\calM^{\pos_*}_\partial (M^n \times I)_{\tilde{g}} \otimes \Q,$ which we
denote $\tilde{\sigma}(\kappa)$. This then defines the map
$\tilde{\sigma}$, and the commutativity of the diagram is immediate by
construction.

By Theorem \ref{injection}, we know that $\sigma$ is injective. It now
follows immediately from the non-triviality of
$\pi_{4q}\BDiff_\partial (D^{n+1})\otimes \Q$, the existence of lifts
of elements on the left-hand side of the square, and the commutativity
of the diagram, that the group $\pi_{4q}\calM^{\pos_*}_\partial (M
\times I)_{\tilde{g}} \otimes \Q$ must be non-zero as
claimed. It remains to note that the construction
  does not depend on the choice of the boundary metric $\tilde g$.
\end{proof}


\subsection{The map $\pi_{4q}\BDiff_\partial(M \times I)\otimes \Q \to
  \pi_{4q}\BDiff_\sqcup(M \times I) \otimes \Q$} For a compact
manifold $M$, let us define a map $\Delta:\BDiff_\sqcup(M\times
I) \to \BDiff_\partial (M \times I)$ as follows.

We begin by constructing a preliminary
map
\begin{equation*}
  \lambda_\sqcup: \Emb_\sqcup(M \times I,\R^{\infty}) \to
  \Emb(M \times I,\R^{\infty+1}).
\end{equation*}
Let $\bar e : M\times I \rh \R^{\infty}$ be the fixed
  embedding from \S2, which we will take as a ``base point'' in $\Emb_\sqcup(M \times
  I,\R^{\infty})$. Then for any embedding $\phi: M\times I \rh
  \R^{\infty}$, the map $\lambda_\sqcup$ is defined by
$\lambda_\sqcup(\phi)(x,t)=(\phi(x,t),t).$ Let $\bar{\bar e}$ denote
the image of $\bar{e}$ under this map, and observe that the elements
in $\text{im}(\lambda_\sqcup)$ agree up to infinite order with
$\bar{\bar e}$ at the lower boundary of $M \times I$, and also along
the sides $\partial M \times I$ in the case where $\partial M \neq
\emptyset$.

Notice that $\Diff_\sqcup(M \times I)$ also acts freely on
$\Emb(M \times I,\R^{\infty+1}),$ and
moreover $\lambda_\sqcup$ is equivariant with respect to the action of
this group on both domain and target spaces.

It follows trivially from Lemma \ref{contractible} that
$\text{im}(\lambda_\sqcup)$ is a contractible space
endowed with free action of $\Diff_\sqcup(M \times
  I)$. Thus we obtain a new model for $\BDiff_\sqcup(M \times
I)$:
\begin{equation*}
\BDiff_\sqcup(M \times I)=\text{im}(\lambda_\sqcup)/\Diff_\sqcup(M \times I).
\end{equation*}
We similarly have a map $\lambda_\partial:\Emb_\partial(M \times
I,\R^{\infty}) \to \Emb(M \times I,\R^{\infty+1})$ given by
$\lambda_\partial(\phi)(x,t)=(\phi(x,t),t),$ and so we obtain a new
model for the classifying space $\BDiff_\partial(M \times
I):$
\begin{equation*}
  \BDiff_\partial(M \times
  I)=\text{im}(\lambda_\partial)/\Diff_\sqcup(M \times I).
\end{equation*}
We will assume these models below.

Composing $\lambda_\sqcup$ with the quotient map
gives
\begin{equation*}
  \Emb_\sqcup(M \times I,\R^{\infty}) \to
\text{im}(\lambda_\sqcup)/\Diff_\sqcup(M \times
I)=\BDiff_\sqcup(M \times I),
\end{equation*}
and by the equivariance of
$\lambda_\sqcup$ this descends to a map
\begin{equation*}
  \Lambda: \Emb_\sqcup(M
\times I,\R^{\infty})/\Diff_\sqcup(M \times I) \to
\text{im}(\lambda_\sqcup)/\Diff_\sqcup(M \times I),
\end{equation*}
or more simply, bearing in mind our original embedding model for
$\BDiff_\sqcup(M \times I):$
\begin{equation*}
  \Lambda: \BDiff_\sqcup(M
  \times I) \to \Diff_\sqcup(M \times I).
\end{equation*}
Next, consider an element $x \in \text{im}(\Lambda) \subset
\text{BDiff}_\sqcup(M \times I)$. According to our last model for this
classifying space, $x$ can be represented by the {\it image} of an
embedding $\theta_x$ of $M \times I$ in $\R^{\infty+1}$, for which the
$\infty$-jet agrees with that of $\bar{\bar{e}}$ when restricted to $M
\times \{0\}$, and also along $\partial M \times I$ if $\partial M
\neq \emptyset.$

The next step, intuitively, is to take two copies of $x$ and translate
one copy through $\R^{\infty+1}$, (unfixing the lower boundary of
course), in such a way that we can glue the non-standard end of the
translated copy to the non-standard end of the original submanifold
representing $x$. After shrinking in the $I$ direction, the resulting
submanifold of $\R^{\infty+1}$ is again an embedded copy of $M \times
I,$ but now the {\it whole} of its boundary is standard, that is,
coincides up to infinite order with the image of the boundary of $M
\times I$ under the map $\bar{\bar{e}}$.

In more technical detail, this process of inversion and gluing amounts
to constructing a map ${\mathcal I}: \text{im}(\Lambda) \to
\text{BDiff}_\partial(M \times I)$ as follows:
\begin{equation*}
{\mathcal I}(\theta_x)(x,t)=
\begin{cases}
\bigl(\theta_x(m,2t),t \bigr) \,\,\,\, \qquad \text{ if } t\in [0,1/2] \\
\bigl(\theta_x(m,2t-1),t \bigr) \quad \text{ if } t\in [1/2,1]. \\
\end{cases}
\end{equation*}
It follows automatically from this construction that ${\mathcal
  I}(\theta_x)$ is standard on all boundary components. Note further
that since the $\infty$-jet of $\theta_x$ agrees with a product
diffeomorphism at the upper boundary, the map ${\mathcal I}(\theta_x)$
is smooth at $t=1/2$, and hence smooth globally.

Finally, set $\Delta:={\mathcal I} \circ \Lambda.$ Clearly, this gives
a map $\Delta: \BDiff_\sqcup(M \times I) \to
\BDiff_\partial (M \times I).$
\begin{proposition}\label{doubling}
With $M^n$ and $q$ as in Corollary \ref{M_nonzero_downstairs}, the
composition
\begin{equation*}
\pi_{4q}\BDiff_\partial(M \times I)
\hookrightarrow \pi_{4q}\BDiff_\sqcup(M \times I)
\xrightarrow{\Delta_*} \pi_{4q}\BDiff_\partial (M \times I)
\end{equation*}
is non-zero.
\end{proposition}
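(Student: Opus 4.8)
The plan is to show that the endomorphism $\Delta_{\ast}\circ\iota_{\ast}$ of $\pi_{4q}\BDiff_\partial(M\times I)$, where $\iota\colon\BDiff_\partial(M\times I)\hookrightarrow\BDiff_\sqcup(M\times I)$ is the inclusion, is non-zero by showing that after tensoring with $\Q$ it is non-zero on classes already known to be non-trivial. Concretely: by Theorem \ref{Farrell} (in dimension $n+1$, which is odd, at least $11$, with $4q\le n-3=(n+1)-4$) we have $\pi_{4q}\BDiff_\partial(D^{n+1})\otimes\Q\cong\Q$, and by Corollary \ref{injection} the map $\sigma\colon\pi_{4q}\BDiff_\partial(D^{n+1})\otimes\Q\to\pi_{4q}\BDiff_\partial(M\times I)\otimes\Q$ is injective; fix $\kappa$ with $\sigma(\kappa)\neq 0$. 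Since a rationally non-zero homomorphism is non-zero, it then suffices to prove that $\Delta_{\ast}(\iota_{\ast}(\sigma(\kappa)))$ is a non-zero multiple of $\sigma(\kappa)$.

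First I would unwind $\Delta=\mathcal I\circ\Lambda$ at the level of bundles over $S^{4q}$. The map $\Lambda$, induced by $\lambda_\sqcup$, is merely a passage between two models for $\BDiff_\sqcup(M\times I)$ and does not alter homotopy classes, so the content is in $\mathcal I$. From its definition, $\mathcal I$ takes an embedded $S^{4q}$-family of cylinders with product-like upper boundary, stacks a second copy of the cylinder part on top of it (traversed in the same $I$-direction — this is exactly why the construction is smooth across the seam $t=1/2$, using the product structure there), and then returns, via the canonical identification $M\times[0,2]\cong M\times I$, a family whose entire boundary is standard. Thus $\Delta\circ\iota$ is (homotopic to) fibrewise self-doubling along the top boundary; on $\pi_{4q}$ this is, for the $H$-space structure on $\BDiff_\partial(M\times I)$ given by concatenation of cylinders, multiplication by $2$ (or by $1$, if one reads $\mathcal I$ as stacking only a trivial collar) — in either case a rational isomorphism, which together with Corollary \ref{M_nonzero_downstairs} already proves the statement; but it is cleanest to verify this directly on $\sigma(\kappa)$.

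Applying this to $\sigma(\kappa)$: recall that $\sigma(\kappa)$ is represented by the trivial bundle $S^{4q}\times(M\times I)$ with a Hatcher disc bundle glued in around a fixed interior disc, which we may take to lie inside $M\times(0,1)$, well away from the upper collar on which $\mathcal I$ acts. The doubling therefore leaves the Hatcher part untouched (and, in the reading where a full second copy is stacked, produces a second Hatcher disc bundle around a disc disjoint from the first); since gluing bundles around disjoint discs adds in $\pi_{4q}\BDiff_\partial(M\times I)$, we obtain $\Delta_{\ast}(\iota_{\ast}(\sigma(\kappa)))=\sigma(\kappa)$ or $2\sigma(\kappa)$, which is non-zero. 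Hence $\Delta_{\ast}\circ\iota_{\ast}$ is non-zero rationally, and the proposition follows.

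The main obstacle I anticipate is precisely the identification of $\Delta\circ\iota$ with fibrewise doubling: carefully tracking the passage through the embedding models, the auxiliary models defined by $\lambda_\sqcup$ and $\lambda_\partial$, the equivalence $\Lambda$, and the gluing map $\mathcal I$ — verifying that $\mathcal I$ is well defined and globally smooth (which rests on the product-like structure at the upper boundary), and, crucially, that the second copy is stacked without reversing the $I$-direction, so that no orientation-reversed copy of the Hatcher disc bundle arises and there is no possibility of cancellation. The explicit formula for $\mathcal I$, which traverses the upper half via $\theta_x(m,2t-1)$ rather than $\theta_x(m,2-2t)$, is what secures this.
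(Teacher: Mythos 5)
Your proposal hinges on reading the explicit formula for $\mathcal I$ literally, with the second half given by $\theta_x(m,2t-1)$, and concluding that the top copy is stacked \emph{without} reversing the $I$-direction, so that $\Delta_\ast\circ\iota_\ast$ is ``fibrewise doubling'' and contributes $\sigma(\kappa)+\sigma(\kappa)=2\sigma(\kappa)$. This misses the central subtlety of the construction. The surrounding text of the paper makes clear that the second copy is glued along its \emph{non-standard} end (the upper boundary $M\times\{1\}$), so the second copy is necessarily traversed with the $I$-direction reversed; the paper states explicitly that ``the upper copy of the Hatcher disc fibre has been inverted.'' Indeed, the formula as printed would be discontinuous at $t=1/2$ — at $t=1/2^-$ one is at $\theta_x(m,1)$ while at $t=1/2^+$ one is at $\theta_x(m,0)$ — so it must be a typo for $\theta_x(m,2-2t)$, and the smoothness-at-the-seam argument you invoke (product structure at $M\times\{1\}$) only makes sense with the reversal. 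Your ``crucially, that the second copy is stacked without reversing the $I$-direction'' is therefore internally inconsistent with your appeal to the product structure at the seam.

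Because of this reversal, the potential cancellation you set out to rule out is in fact the real issue, and it does not simply disappear. After $\mathcal I$ one obtains, in each fibre of the $(M\times I)$-bundle, two glued-in Hatcher discs: one as before, and one \emph{inverted}. Whether the inverted copy contributes $+\sigma(\kappa)$, $-\sigma(\kappa)$, or something else to $\pi_{4q}\BDiff_\partial(M\times I)\otimes\Q$ is not obvious a priori. The paper's proof is designed precisely to bypass that computation: it encloses both Hatcher discs in a larger disc to get a class $\gamma\in\pi_{4q}\BDiff_\partial(D^{n+1})\otimes\Q$, caps off to a sphere bundle, and then \emph{slides} the inverted Hatcher disc across the equator into the capping disc; this sliding re-inverts it, and one recognises the original Hatcher sphere bundle representing $\varphi\neq 0$. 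That sliding/re-inversion step is the key idea, and your argument omits it. Without addressing the inversion, the claim $\Delta_\ast(\iota_\ast(\sigma(\kappa)))=2\sigma(\kappa)$ is unjustified.
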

\begin{proof}
Consider the standard copy of $M \times I$ described above, and remove
a small disc $D^{n+1}$ centred on a point in $m \in M \times \{1/2\}.$
Form the product $S^{4q} \times \bigl((M \times I)\setminus
D^{n+1}\bigr).$

Take any element $0 \neq \varphi \in
\pi_{4q}\BDiff_{x_0}(S^{n+1})$, and consider the corresponding
Hatcher (sphere) bundle. This is the double of a Hatcher disc bundle
$E_\kappa$ corresponding to an element
\begin{equation*}
  \kappa \in
  \pi_{4q}\BDiff_\partial (D^{n+1})\otimes \Q.
\end{equation*}
Clearly $\kappa \neq 0$. We can assume that the fibres of $E_\kappa$
are embedded in $\R^{\infty+1}$, with each boundary sphere coinciding
with the boundary of the small disc $D^{n+1} \subset M \times I$
chosen above. We can therefore perform a fibrewise gluing of
$E_\kappa$ into the trivial bundle $S^{4q} \times
\bigl((M \times I)\setminus D^{n+1}\bigr)$ in the obvious way,
smoothing any corners as necessary. The resulting object is an $(M
\times I)$-bundle $\mathcal B$ over $S^{4q}$, which is trivial when
restricted to each boundary component. Globally $\mathcal B$ is a
trivial bundle in the topological category, but like the Hatcher
bundle, is not necessarily trivial in the smooth category. This bundle
is represented by $\sigma(\kappa) \in \pi_{4q} \BDiff_\partial
(M \times I)\otimes \Q,$ and by Corollary \ref{injection} we have that
$\sigma(\kappa) \neq 0$. By inclusion we can view $\sigma(\kappa)$ as
an element of $\pi_{4q}\BDiff_\sqcup(M \times I)\otimes \Q.$

Next, we apply the map $\Delta:\BDiff_\sqcup(M \times I) \to
\BDiff_\partial (M \times I)$ defined above simultaneously to all the
fibres in $\mathcal B$ to create a new $(M \times I)$-bundle
${\mathcal B}\cup\bar{\mathcal B}$ over $S^{4q}.$ Each fibre of this
new bundle is a standard copy of $M \times I$ from which two small
discs centered on $(m,1/4)$ and $(m,3/4)$ have been removed, and
replaced by two copies of the corresponding fibre from the Hatcher
disc bundle $E_\kappa$. Notice that the upper copy of the Hatcher disc
fibre has been inverted, in the precise sense that the $I$-direction
of the glued-in disc has been reversed. As the boundary of ${\mathcal
  B}\cup\bar{\mathcal B}$ is standard, it is clear that this bundle
represents a well-defined element $\beta \in \pi_{4q}\BDiff_\partial
(M \times I)\otimes \Q$. Since $\beta=\Delta_*(\sigma(\kappa)),$ to
prove the Proposition we simply need to show that $\beta$ is non-zero.

In order to analyze ${\mathcal B}\cup\bar{\mathcal B}$, we choose a
larger disc in each fibre $M \times I$, (say within the region $M
\times [1/8,7/8]$), which contains both glued Hatcher discs in its
interior. As the boundary of this new disc lies in the trivial part of
the bundle, we can choose the ``same'' disc for all fibres. The
collection these discs forms a bundle $\mathcal D$ over $S^{4q},$
which represents an element $\gamma
\in\pi_{4q}\BDiff_\partial(D^{n+1})\otimes \Q.$

By Corollary \ref{injection}, if $\gamma \neq 0$ then $\beta \neq 0.$
Thus the proof of the Proposition reduces to studying the bundle
$\mathcal D$.

Next, we form a sphere bundle $\mathcal S$ by gluing $\mathcal D$
(which has trivial boundary) to a trivial disc bundle over $S^{4q}.$
It is easy to see that $\mathcal S$ represents an element $\zeta \in
\pi_{4q}\BDiff_{x_0}(S^{n+1})\otimes \Q$, where $x_0$ is a fixed
choice of point on the common boundary between the two discs. (By the
boundary triviality assumptions, $x_0$ is unambiguously defined in
every fibre.) Mapping $\gamma$ to $\zeta$ gives a map
$\pi_{4q}\BDiff_\partial(D^{n+1})\otimes \Q \to
\pi_{4q}\BDiff_{x_0}(S^{n+1})\otimes \Q.$ It is easily checked
that this map is a homomorphism, so the Proposition is proved if we
can show that $\zeta \neq 0.$

Fixing the lower Hatcher discs in $\mathcal D$, we now simultaneously
slide the upper (inverted) Hatcher discs upwards in the $I$-direction,
and across the boundary into the trivial disc bundle. We do this
following the obvious continuation of the $I$-direction in each fibre,
until the upper Hatcher discs all lie in the interior of the trivial
disc bundle. Observe that this deformation re-inverts the Hatcher
discs, and so the resulting object (which as a bundle is clearly
diffeomorphic to $\mathcal S$), is a union of two new disc
bundles. Each of these disc bundles is formed from a trivial disc
bundle by the removal of an interior disc from every fibre, and gluing
in {\it the same} Hatcher disc bundle (representing the element
$\kappa\in \pi_{4q}\BDiff_\partial (D^{n+1})\otimes \Q$) into
the resulting bundle of ``holes''. But in each case this is smoothly
equivalent to the Hatcher disc bundle itself, and it follows
immediately that $\mathcal S$ is simply the Hatcher sphere bundle we
started out with. Thus $\zeta=\varphi \neq 0$. This completes the
proof of the Proposition.
\end{proof}
\begin{corollary}\label{doubling_corollary}
With $M^n$ and $q$ as in Corollary \ref{M_nonzero_downstairs}, the
homomorphism
\begin{equation*}
  \tau:\pi_{4q}\BDiff_\partial(M \times I)\otimes \Q
  \to \pi_{4q}\BDiff_\sqcup(M \times I)\otimes \Q
\end{equation*}
induced by the inclusion of the classifying spaces is non-zero. In
particular, $\tau$ is non-zero when restricted to the image of the map
$\sigma:\pi_{4q}\BDiff_\partial(D^{n+1})\otimes \Q \to
\pi_{4q}\BDiff_\partial(M \times I)\otimes \Q.$
\end{corollary}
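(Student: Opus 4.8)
The plan is to deduce the corollary directly from Proposition \ref{doubling} together with the naturality of the constructions introduced in this section. First I would observe that Proposition \ref{doubling} exhibits a specific composite
\begin{equation*}
\pi_{4q}\BDiff_\partial(M \times I)\otimes \Q \xrightarrow{\ \tau\ } \pi_{4q}\BDiff_\sqcup(M \times I)\otimes \Q \xrightarrow{\ \Delta_*\ } \pi_{4q}\BDiff_\partial(M \times I)\otimes \Q
\end{equation*}
which is non-zero; here $\tau$ is precisely the map induced by the inclusion $\BDiff_\partial(M \times I) \hookrightarrow \BDiff_\sqcup(M \times I)$ appearing in the statement. If $\tau$ were the zero homomorphism, then the composite $\Delta_* \circ \tau$ would also vanish, contradicting Proposition \ref{doubling}. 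Hence $\tau \neq 0$, which is the first assertion.

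For the second assertion, I would trace through the proof of Proposition \ref{doubling} to see that the non-zero element it produces in $\pi_{4q}\BDiff_\partial(M \times I)\otimes \Q$ is exactly $\sigma(\kappa)$ for the class $\kappa \in \pi_{4q}\BDiff_\partial(D^{n+1})\otimes \Q$ arising from a non-trivial Hatcher sphere bundle. That proof shows that $\Delta_*(\tau(\sigma(\kappa))) = \beta \neq 0$, so in particular $\tau(\sigma(\kappa)) \neq 0$. Since $\sigma(\kappa)$ lies in the image of $\sigma:\pi_{4q}\BDiff_\partial(D^{n+1})\otimes \Q \to \pi_{4q}\BDiff_\partial(M \times I)\otimes \Q$, this witnesses that $\tau$ is non-zero on $\im(\sigma)$.

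The only thing requiring a word of care is the identification of the inclusion-induced map with the map $\tau$ in the hypothesis of Proposition \ref{doubling}: the proposition is stated in terms of the inclusion $\pi_{4q}\BDiff_\partial(M \times I) \hookrightarrow \pi_{4q}\BDiff_\sqcup(M \times I)$, and I would simply note that this inclusion is by definition the map induced on classifying spaces by the subgroup inclusion $\Diff_\partial(M \times I) \subset \Diff_\sqcup(M \times I)$, so it agrees with $\tau$ (and both become $\tau$ after tensoring with $\Q$). There is no real obstacle here; the corollary is a formal consequence of the proposition, and the main content has already been carried out in establishing Proposition \ref{doubling}.
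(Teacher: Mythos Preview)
Your proposal is correct and follows essentially the same approach as the paper: both arguments deduce the corollary formally from Proposition \ref{doubling} by noting that $\Delta_*\circ\tau$ applied to $\sigma(\kappa)$ is non-zero, hence $\tau(\sigma(\kappa))\neq 0$. The only cosmetic difference is that the paper first observes that the first assertion follows from the second (since $\im(\sigma)\neq 0$), whereas you prove the first assertion directly from the non-vanishing of the composite; the content is identical.
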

\begin{proof}
Under the assumptions on $n$ and $q$, Theorems \ref{injection} and
\ref{Farrell} show that the image of $\sigma$ is non-zero, and hence
the first statement follows trivially from the second.

The second statement follows from the fact that given $0 \neq \kappa
\in \pi_{4q}\BDiff_\partial(D^{n+1})\otimes \Q$ as in the proof
of the above Proposition, $\sigma(\kappa)$ maps to a non-zero element
in $\pi_{4q}\BDiff_\partial(M \times I)\otimes \Q$ via the
inclusion map to $\pi_{4q}\BDiff_\sqcup(M \times I)\otimes \Q$
composed with the map $\Delta_\ast$. Hence the image of the inclusion
map in $\pi_{4q}\BDiff_\sqcup(M \times I)\otimes \Q$ is also
non-zero, as required.
\end{proof}


\subsection{Proof of the Main Theorem}
\begin{proof}
We have a commutative diagram
\begin{center}
\begin{tikzcd}
  \pi_{4q}\calM^{\pos_*}_\partial(M \times I)_{\tilde g}\otimes \Q \arrow[d] \arrow[r]
  & \pi_{4q}\calM^{\pos_*}_\sqcup(M \times I)_g\otimes \Q \arrow[d]
  \\
  \pi_{4q}\BDiff_\partial(M \times I)\otimes \Q \arrow[r]
  & \pi_{4q}\BDiff_\sqcup(M \times I)\otimes \Q
\end{tikzcd}
\end{center}
where all maps are induced by inclusion. By Corollary
\ref{doubling_corollary} the lower map is non-zero on the image of
$\sigma$. Moreover by Proposition \ref{M-lift} we know that each
non-zero element
$\text{im}(\sigma) \in
\pi_{4q}\BDiff_\partial(M \times I)\otimes \Q$ lifts to a
non-zero element in $\pi_{4q}\calM^{\pos_*}_\partial(M \times
I)_{\tilde g}\otimes \Q$. It follows immediately by commutativity that all such
lifts map to a non-zero element in $\pi_{4q}\calM^{\pos_*}_\sqcup(M
\times I)_g\otimes \Q$, demonstrating that the latter group is non-zero
as claimed.
\end{proof}


\bigskip

\noindent{\it Boris Botvinnik, Department of Mathematics, University
  of Oregon, U.S.A.
  \\ Email: botvinn@uoregon.edu}

\bigskip

\noindent {\it David Wraith, 
	Department of Mathematics and Statistics, 
	National University of Ireland Maynooth, Maynooth, 
	County Kildare, 
	Ireland. \\
	Email: david.wraith@mu.ie.}

\end{document}